\tikzset{node distance=2cm, auto}
\newcommand{\h}[1]{\hat{#1}}
\newcommand{\ti}[1]{\tilde{#1}}
\newcommand{\mK}{\mathbb{K}}
\newcommand{\mZ}{\mathbb{Z}}
\newcommand{\mS}{\mathbb{S}}
\newcommand{\mC}{\mathbb{C}}
\newcommand{\mP}{\mathbb{P}}
\newcommand{\ph}{\varphi}
\newcommand{\cO}{\mathcal{O}}
\newcommand{\bm}{\begin{pmatrix}}
\newcommand{\ema}{\end{pmatrix}}
\newcommand{\bsm}{\left(\begin{smallmatrix}}
\newcommand{\esm}{\end{smallmatrix}\right)}
\newcommand{\tor}{\operatorname{tor}}
\newcommand{\Hom}{\operatorname{Hom}}
\newcommand{\Id}{\operatorname{Id}}
\newcommand{\Tr}{\operatorname{Tr}}
\newcommand{\Ext}{\operatorname{Ext}}
\newcommand{\Sldeux}{\operatorname{SL}_2}
\newcommand{\SL}{\operatorname{SL}}
\newcommand{\GL}{\operatorname{GL}}
\newcommand{\Sltrois}{\operatorname{SL}_3}
\newcommand{\Gldeux}{\operatorname{GL}_2}
\newcommand{\len}{\operatorname{length}}
\newcommand{\al}{\alpha}
\newcommand{\be}{\beta}
\newcommand{\g}{\gamma}
\newcommand{\de}{\delta}
\newcommand{\la}{\lambda}
\newcommand{\Si}{\Sigma}
\newcommand{\frp}{\mathfrak{p}}
\newcommand{\benu}{\begin{enumerate}}
\newcommand{\eenu}{\end{enumerate}}
\newcommand{\brho}{{\bar{\rho}}}
\tikzset{node distance=2cm, auto}
\theoremstyle{definition}
\newtheorem{theo}{Theorem}[section]
\newtheorem{lem}[theo]{Lemma}
\newtheorem{cor}[theo]{Corollary}
\newtheorem{prop}[theo]{Proposition}
\theoremstyle{remark}
\newtheorem{defi}[theo]{Definition}
\newtheorem{remk}[theo]{Remark}
\newtheorem{ex}[theo]{Example}
\newtheorem{question}[theo]{Question}
\newtheorem*{nota}{Notation}
\newtheorem*{theo*}{Theorem}
\subjclass[2000]{Primary: 57M25, Secondary: 57M27}
\keywords{Reidemeister torsion, character varieties, 3-manifolds, Culler-Shalen theory}
\begin{document}

\title{Torsion function on character varieties}


\begin{abstract}
In this paper we define the Reidemeister torsion as a rational function on the geometric components of the character variety of a one-cusped hyperbolic manifold $M$. We study its poles and zeros, and we deduce sufficient conditions on the manifold $M$ for this function being non-constant. 
\end{abstract}
\author{Leo Benard}
\address{Georg--August Universit\"at, Mathematisches Institut, Busenstrasse 3-5, 37073 G\"ottingen}
\email{leo.benard@mathematik.uni-goettingen.de}
\date{}
\maketitle

\section{Introduction}
The Reidemeister torsion has been introduced as a combinatorial invariant of homological complexes in 1935 by both Reidemeister (in \cite{Rei}) and Franz (in \cite{Fra}) independently. It also appears in a more algebraic context in the seminal work of Cayley (see \cite[Appendix B]{GKZ94}) in 1848. Later on, it has been shown by Chapman (\cite{Chapman, Cohen}) to be a topological invariant of manifolds.
\medbreak

In this article $M$ will be a 3-manifold, whose boundary $\partial M$ is a torus, with rational homology of a circle, e.g. the exterior of a knot in a rational homology sphere. Given a representation $\rho \colon  \pi_1(M) \to \Sldeux(\mC)$ such that the  complex $C^*(M, \rho)$ of $\rho$-twisted cohomology with coefficients in $\mC^2$ is acyclic, the torsion $\tor(M,\rho)$ of this complex is a complex-valued invariant of the pair $(M,\rho)$, defined up to sign. Since for any representation $\rho' \colon  \pi_1(M) \to \Sldeux(\mC)$ conjugate to $\rho$, the invariants $\tor(M,\rho)$ and $\tor(M, \rho')$ do coincide, it is natural to define the Reidemeister torsion as a rational function on the algebraic space of conjugacy classes of such representations, namely the character variety. It is done rigorously in this article. 
\medbreak

More precisely, let $X$ be a one-dimensional component of the character variety. The torsion function is seen as a non-zero element of the function field $k(X)$ of $X$. While it is not usually defined in the way it is in this paper, the torsion function has been long established as such. The question of how to compute this function and whether it has some zeros or not is still under investigation. It is known to be a constant function on the character varieties of torus knots (the case of the trefoil knot is explicitly computed in Section \ref{section examples}). The first non-constant computation was done by Teruaki Kitano for the figure-eight knot complement in \cite{Kit94}. Since then, because of its proximity with the twisted Alexander polynomial, there has been many more studies of this torsion. In \cite{DFJ}, the authors address several questions on the twisted Alexander polynomial. In particular they conjecture that the degree of this polynomial is related in a very precise way to the genus of the knot. The torsion function turns out to be the specialization of this polynomial at $t=1$.

\medbreak

When the interior of $M$ carries a complete hyperbolic structure, it comes with a distinguished representation $\rho \colon \pi_1(M) \to \operatorname{PSL}_2(\mC)$ which always lifts to $\SL_2(\mC)$. It is called a holonomy representation. A component $X$ of the character variety $X(M)$ that contains the character of a holonomy representation is called a geometric component. It follows from the work of Thurston that such a component has dimension equal to the number of cusps of $M$, see \cite[Section 4.5]{Sha02} for an overview of Thurston's proof.

\medbreak

The first result of this article is the following:
\begin{theo}  \label{acyclic finite}
Let $M$ will be a hyperbolic 3-manifold whose boundary $\partial M$ is a torus. Let $X$ be a geometric component of the character variety $X(M)$ fo $M$. Then the torsion defines a regular function $\tor(M)$ on $X$. It vanishes at a character $\chi$ of $X$ if and only if the vector space $H^1(M, \brho)$ is non-trivial, where $\brho$ is a representation $\brho \colon  \pi_1(M) \to \Sldeux(\mC)$ whose character is $\chi$.
\end{theo}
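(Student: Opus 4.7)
The plan is to define $\tor(M)$ algebraically via a twisted chain complex as an element of $k(X)^\times$, show that this rational function is regular on the entire geometric component $X$, and then identify its zero set with $\{\chi \in X : H^1(M, \brho) \neq 0\}$.

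I would start by fixing a CW structure on $M$ with $\partial M$ as a subcomplex, lifting each cell to the universal cover $\ti M$, and forming for each $\rho \in R(M)$ the twisted chain complex $C_*(M, \rho) = C_*(\ti M) \otimes_{\mZ[\pi_1(M)]} \mC^2$. Its differentials are matrices with polynomial entries in $\rho$, and on the acyclic locus the Reidemeister torsion is a conjugation-invariant alternating product of minor determinants, which therefore descends (a priori only as a rational function) to the character variety. To check that acyclicity on $X$ is equivalent to $H^1(M, \rho) = 0$, I would argue as follows: a generic $\rho \in X$ is irreducible with loxodromic peripheral restriction, so $H^0(M, \rho) = H^0(\partial M, \rho) = 0$; Poincaré--Lefschetz duality $H^i(M, \partial M; \rho) \cong H^{3-i}(M, \rho)^*$ (using self-duality of $\mC^2$ under $\Sldeux(\mC)$), the long exact sequence of the pair $(M, \partial M)$, and $\chi(M) = 0$ then yield $H^3 = 0$, $\dim H^1 = \dim H^2$, and an injection $H^1(M, \partial M; \rho) \hookrightarrow H^1(M, \rho)$, whence the equivalence. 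Known non-vanishing of the torsion at the holonomy shows the acyclic locus is Zariski open dense in $X$.

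The main difficulty is to promote ``rational'' to ``regular'' on all of $X$. Here I would invoke Fox calculus: given a deficiency-one presentation $\pi_1(M) = \langle g_1, \ldots, g_n \mid r_1, \ldots, r_{n-1}\rangle$, the Wada/Turaev formula expresses
\[
\tor(M, \rho) \;=\; \pm\, \frac{\det A_k(\rho)}{\det(\rho(g_k) - \Id)},
\]
where $A_k(\rho)$ is the $(n-1) \times (n-1)$ block minor of the Fox matrix obtained by deleting the columns indexed by $g_k$; both numerator and denominator are polynomial functions on $R(M)$ and descend to regular functions on $X$. Since $X$ is one-dimensional and irreducible, the pole set $\{\det(\rho(g_k) - \Id) = 0\} \cap X$ is finite for any $k$, and by varying $k$ (replacing the presentation with one containing an interior, non-peripheral generator at problematic characters) one covers $X$ by charts on which the ratio is regular; patching yields a regular function on all of $X$.

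Once regularity is established, the vanishing criterion follows immediately: $\tor(M)(\chi) \neq 0$ exactly on the acyclic locus, which by the previous analysis coincides with $\{\chi : H^1(M, \brho) = 0\}$ for any $\brho$ with character $\chi$.
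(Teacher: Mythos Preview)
Your approach via Fox calculus and patching is genuinely different from the paper's, which works one valuation at a time: for each finite $v\in\hat X$ it applies the identity $v(\tor(C^*\otimes k(X)))=\sum_k(-1)^k\len H^k(C^*_v)$ from \cite{GKZ94} to the complex $C^*(M,\rho)_v$ of $\cO_v$-modules, then shows $H^0(M,\rho)_v=H^1(M,\rho)_v=0$ so that $v(\tor)=\len H^2(M,\rho)_v\ge 0$. This yields regularity and simultaneously the \emph{exact vanishing order}, not just the zero set. Your ratio-of-regular-functions argument is more elementary and avoids the $\cO_v$-module machinery, but it does not see the multiplicity.

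There is, however, a real gap in your patching step. To find at every $\chi\in X$ a generator $g_k$ with $\det(\brho(g_k)-\Id)\neq 0$, you need that no $\chi\in X$ satisfies $\chi(Y_\gamma)=2$ for all $\gamma$; equivalently, that the trivial character does not lie on $X$. You only argued $H^0=0$ generically, not at every point, and ``replacing the presentation by one with a non-peripheral generator'' does not help if \emph{every} element has trace $2$. The paper fills this via a semi-continuity argument (its Lemma~\ref{lem central}): a central character has orbit of dimension $\le 2$ in the representation variety, whereas irreducible orbits are $3$-dimensional, so a one-dimensional component of irreducible type cannot contain a central character. You should invoke or reprove this.

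A second, smaller gap is in your final sentence: ``$\tor(M)(\chi)\neq 0$ exactly on the acyclic locus'' is not automatic from regularity. What makes it work in your setup is that, once $\det(\brho(g_k)-\Id)\neq 0$, the minor $A_k(\brho)$ is precisely the matrix of $\partial_2$ viewed as a map $C_2\to\ker\partial_1$, so $\det A_k(\brho)=0$ iff $H_1(M,\brho)\neq 0$; combined with $\chi(M)=0$ and $H_0=0$ this gives $H^1(M,\brho)\neq 0$. You should make this explicit rather than declaring it immediate.
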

The fact that the torsion has no poles on $X$ was expected. Yet, to our knowledge, there has been no such formal statement to date. The characterization in terms of jump of dimension of the vector space $H^1(M, \brho)$ is  relates the torsion with the deformation theory of semi-simple representations in $\Sltrois(\mC)$. More precisely, given an irreducible representation $\rho \colon   \pi_1(M) \to \Sldeux(\mC)$, one can construct a semi-simple representation $\ti{\rho} \colon  \pi_1(M) \to \Sltrois(\mC)$ by defining $\ti{\rho} = \bsm \rho & 0 \\ 0 & 1 \esm$. A classical dimensional argument (see \cite[Section 5]{HP15} for instance), shows that if $\ti{\rho}$ is deformable into irreducible representations in the character variety $X(M, \Sltrois(\mC))$ then there exists necessarily a reducible, non-semi-simple representation $\ti{\rho}'$ with the same character, namely $\ti{\rho}' = \bsm \rho & z \\ 0 & 1 \esm$ where $z \colon  \pi_1(M) \to \mC^2$ represents a non-trivial class in $H^1(M, \rho)$. More generally, given $\la \in \mC^*$ and a surjective abelianization map $\ph \colon  \pi_1(M) \to \mZ$, it is proven in \cite{HP15} that if the representation $\ti{\rho}_\la = \bsm \la^\ph \rho & 0 \\ 0 & \la^{-2\ph} \esm$ is deformable into irreducible representations, then the twisted Alexander polynomial $\Delta_\rho(\la^3)$ vanishes.
 A converse statement is proved in the case when $\la^3$ is a simple root of the twisted Alexander polynomial. Our Theorem \ref{acyclic finite} corresponds to the case where $\la=1$, saying that if $\Delta_\rho(1) = 0$ then there exists a non-trivial $z\in H^1(M, \rho)$, hence a non-semi-simple $\ti{\rho}'$ as above. It generalizes a basic fact from the $\Sldeux(\mC)$-case.
\medbreak

The second part of this article focuses on the asymptotic behavior of the torsion function on the variety $X$. There is a canonical way to define a compact Riemann surface $\hat{X}$ birational to $X$, by desingularizing $X$ and adding points at infinity, see Section \ref{valu}. Those points added at infinity will be called ideal points in $\h{X}$. In this paper we extend the torsion to a rational function on $\h{X}$. Given $x$ an ideal point in $\h{X}$, we describe in Section \ref{tree} a construction due to Marc Culler and Peter Shalen in the early 80's. This construction produces an incompressible embedded surface $\Si \subset M$ associated to $x$. 

Although no representation of the fundamental group of $M$ corresponds to this ideal point, it corresponds to a representation of the fundamental group of the surface $\Sigma$, denoted by $\brho_\Si\colon \pi_1(\Si) \to \Sldeux(\mC)$. It is called the \textit{residual representation}, and we say that this representation is non-trivial if there exists $\gamma$ in $\pi_1(\Sigma)$ such that $\Tr \brho_\Si(\gamma) \neq 2$. The residual representation  $\brho_\Si$ representation is known (see~\cite{CCGLS}) to map the class of the boundary curve $\partial \Si$ on a matrix whose eigenvalues are roots of unity. Moreover, the order of those roots of unity divides the minimal number of boundary components of any connected component of the surface $\Si$. Now we can state the second result of this article:

\begin{theo} \label{acyclic ideal}
Let $x \in \h{X}$ be an ideal point in the smooth projective model of a geometric component $X$ of the character variety. Assume that the associated incompressible surface $\Sigma$ is a union of parallel homeomorphic copies $\Si_i$ such that $M \setminus \Si_i$ is a (union of) handlebodie(s). If the residual representation $\brho_\Sigma$ is non-trivial, and $\Tr \brho_\Sigma([\partial \Sigma]) =2$, then the torsion function $\tor(M)$ has a pole at $x$. 
\end{theo}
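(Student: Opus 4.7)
The plan is to decompose $M$ along the incompressible surface $\Sigma$ and apply the multiplicativity of Reidemeister torsion to the Mayer--Vietoris short exact sequence of twisted chain complexes. Writing $M\setminus\Sigma = \bigsqcup_j H_j$ as a disjoint union of handlebodies, this multiplicativity yields an identity, modulo signs and relative-torsion corrections, of the form
\[
\tor(M,\rho) \;=\; \pm\,\frac{\prod_j \tor(H_j,\rho|_{H_j})}{\prod_i \tor(\Sigma_i,\rho|_{\Sigma_i})}.
\]
The pole of $\tor(M)$ at $x$ will then arise from a zero of some $\tor(\Sigma_i,\rho|_{\Sigma_i})$, produced by the degeneration of $\brho_\Sigma$ on the boundary curve $\gamma$.

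\textbf{Main computation.} To make this rigorous I would work over the discrete valuation ring $\cO = \cO_{\h{X},x}$ with uniformizer $t$ and function field $K$, and consider the tautological representation $\rho : \pi_1(M)\to\Sldeux(K)$. The Culler--Shalen construction provides, after a conjugation adapted to the vertex stabilizers of the associated Bass--Serre tree, an identification modulo $t$ of the restriction of $\rho$ to $\pi_1(\Sigma)$ with $\brho_\Sigma$; at the same time the matrix entries of $\rho$ on each $\pi_1(H_j)$ lie in $\cO$. Since $\pi_1(H_j)\cong F_{g_j}$ is free, the twisted complex of $H_j$ is two-term and $\tor(H_j,\rho|_{H_j})$ is polynomial in these bounded entries, so its valuation $v$ at $x$ satisfies $v(\tor(H_j,\rho|_{H_j}))\geq 0$. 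On the other hand, since both eigenvalues of $\brho_\Sigma(\gamma)$ equal $1$, the element $\brho_\Sigma(\gamma)-\Id$ is nilpotent and produces an extra class in $H^0(\langle\gamma\rangle,\brho_\Sigma)$; by the general principle of Theorem \ref{acyclic finite}, a jump in the cohomology of a limit representation forces a zero of the torsion, so $v(\tor(\Sigma_i,\rho|_{\Sigma_i}))>0$ for at least one $i$. Combining these inequalities in the displayed formula yields $v(\tor(M,\rho))<0$, i.e.\ a pole.

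\textbf{Main obstacles.} The central difficulty will be establishing the multiplicative formula rigorously when the pieces are not acyclic with respect to $\rho$: one must pass from the naive product formula to a version involving relative torsions and the connecting homomorphisms in the Mayer--Vietoris long exact sequence, which is technical but standard. A second challenge is the Culler--Shalen conjugation step: the rescalings that make $\rho$ bounded on each $\pi_1(H_j)$ depend on the piece, and one must verify that they assemble into globally defined $\cO$-chain complexes so that the entire Mayer--Vietoris sequence lives over $\cO$. Finally, when $\Sigma$ consists of several parallel copies, the order of vanishing of the denominator must be computed carefully; the multiplicity of copies should only increase it, so a pole of $\tor(M)$ at $x$ persists.
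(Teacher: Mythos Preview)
Your strategy has a genuine gap: the displayed multiplicative formula is not the correct one, and the source of the pole is not where you think it is.

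When the pieces $H_j$ and $\Sigma_i$ are \emph{not} acyclic --- and here they never are, since each has nontrivial $H^1$ --- Milnor's multiplicativity formula contains an additional factor $\tor(\mathcal{H})$, the torsion of the Mayer--Vietoris long exact sequence $\mathcal{H}$ itself (viewed as an acyclic based complex). You acknowledge this in your ``Main obstacles'' paragraph but treat it as a technicality. In fact this factor is the entire content of the theorem: in the paper's setup one shows that $\tor(M_1)$, $\tor(M_2)$ and $\tor(\Sigma)$ all lie in $\cO_v^*$, because the relevant $\cO_v$-cohomology modules $H^1(M_i,\rho_i)_v$ and $H^1(\Sigma,\rho_\Sigma)_v$ are \emph{free}. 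So your claim that $v(\tor(\Sigma_i,\rho|_{\Sigma_i}))>0$ is false, and your invocation of Theorem~\ref{acyclic finite} is misplaced --- there is no jump of $\dim H^1(\Sigma,\brho_\Sigma)$ at $x$, since that dimension is constantly $-2\chi(\Sigma_1)$.

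The pole actually comes from $\tor(\mathcal{H})^{-1}=\det\theta$, where $\theta:H^1(M_1,\rho_1)\oplus H^1(M_2,\rho_2')\to H^1(\Sigma,\rho_\Sigma)$ is the Mayer--Vietoris isomorphism (in the split case; the non-split case is analogous). One must show that the reduction $\bar\theta$ over the residue field is not an isomorphism, and this is where the eigenvalue hypothesis enters: it forces $H^1(\gamma,\brho_{2,\gamma})\to H^1(\gamma,k)$ to be nonzero, and a diagram chase through the long exact sequence of the pair $(\partial M_2,\Sigma)$ then shows that the composite $H^1(M_2,\brho_2)\to H^1(\Sigma,\lambda^{-1})$ fails to surject. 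This is a topological argument about how the boundary curve sits in $\partial M_2$, not a local computation on $\Sigma$ alone; your proposal does not contain this idea.
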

A surface whose complement in a manifold M is a (union of) handlebodies is called a free surface.
We deduce the following corollary:
\begin{cor}
\label{cor:main}
Let $M$ be a hyperbolic manifold and $X$ be a geometric component of its $\Sldeux(\mC)$ character variety.
Assume that an ideal point of the smooth projective model $\hat{X}$ of $X$ detects an incompressible surface $\Sigma$ which is union of parallel free copies, such that the residual representation $\brho_\Sigma$ is non-trivial, and that $\Tr \brho_\Sigma([\partial \Sigma]) =2$. Then the torsion function is not constant on the component $X$.
\end{cor}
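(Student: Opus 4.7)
The plan is to deduce the corollary as a direct consequence of Theorems \ref{acyclic finite} and \ref{acyclic ideal}, combined with the elementary fact that a rational function on the smooth projective curve $\h{X}$ which has a genuine pole cannot be constant.

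First I would verify that the hypotheses of the corollary imply those of Theorem \ref{acyclic ideal}. In the case where $\Si$ is connected and free (meaning $M\setminus\Si$ is a union of handlebodies), setting $\Si_1 = \Si$ presents $\Si$ as a single "parallel copy" in the sense of Theorem \ref{acyclic ideal}, and the assumption that $M\setminus\Si_1$ is a (union of) handlebodie(s) is satisfied by hypothesis. In the case where $\Si$ is a disjoint union of several parallel free copies, the statement is already that of Theorem \ref{acyclic ideal} verbatim. Together with the assumption that the eigenvalue of the boundary curve $\g = \partial\Si$ is $1$, Theorem \ref{acyclic ideal} applies and produces a pole of $\tor(M)$ at the ideal point $x \in \h{X}$.

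On the other hand, Theorem \ref{acyclic finite} guarantees that $\tor(M)$ is a regular function on $X$, hence has no poles at any finite point of $\h{X}$; every pole of the extension of $\tor(M)$ to $\h{X}$ must therefore occur at an ideal point. Combining the two facts, the pole at $x$ supplied by Theorem \ref{acyclic ideal} is a genuine pole of the rational function $\tor(M) \in k(\h{X})^*$. Consequently $\tor(M)$ cannot be constant on $\h{X}$, and restricting to the Zariski dense open subset $X \subset \h{X}$ yields the non-constancy on $X$.

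There is essentially no obstacle here: the corollary is a packaging of the two main theorems. The only point that requires a sentence of care is the matching of the "connected or union of parallel free copies" formulation of the corollary with the "union of parallel homeomorphic copies" hypothesis of Theorem \ref{acyclic ideal}, which amounts to allowing the degenerate case of a single copy, and in both cases it is the freeness assumption that delivers the handlebody condition demanded by the theorem.
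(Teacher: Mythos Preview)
Your proposal is correct and matches the paper's own treatment: the corollary is stated immediately after Theorem \ref{acyclic ideal} with no separate proof, being an immediate consequence of the fact that a rational function in $k(X)^*$ with a pole cannot be constant. One small remark: your appeal to Theorem \ref{acyclic finite} is not actually needed---once Theorem \ref{acyclic ideal} produces a pole at $x$, non-constancy follows directly, regardless of the behaviour at finite points---but its inclusion does no harm.
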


Remark that for sake of brevity we state both Theorems \ref{acyclic finite} and \ref{acyclic ideal} for $X$ a geometric component in $X(M)$, but along the paper it will be enough to assume that $X$ is a one-dimensional component that contains the character of an irreducible representation, that the complex $C^*(M, \rho)$ is acyclic for some representation $\rho$ whose character lies in $X$ and that the detected surface $\Sigma$ has non-positive Euler characteristic. 

For instance, any component $X$ of $X(M)$ is also one-dimensional when the manifold $M$ is small (a small manifold is a manifold that contains no \textit{closed} incompressible surfaces. Two-bridge knot complements are examples of small manifolds.). 

The assumption that the eigenvalues of $\brho_\Sigma([\partial \Sigma])$ are equal to 1 is motivated by numerical computations and by the fact that it occurs in many known examples, for instance if the associated surface is non-separating in $M$ (including Seifert surfaces). In fact it has been a difficult task to find roots of unity different of $\pm 1$ with this construction, see \cite{Dun98} for the first known example. On the other hand, in \cite{Che05} Chesebro proves that any root of unity can be obtained as the eigenvalue of the boundary curve of an incompressible surface detected by an ideal point. 

Finally, the assumption that the surface is free automatically satisfied whenever $M$ is a small manifold.

We provide infinite families of candidates for which the corollary may apply, that are knots in $\mS^3$. Unfortunately, (except for the case of the figure-eight knot where we checked it by hand), the non-triviality condition, despite generic in the character variety, seems difficult to check in general.
\begin{cor}
\label{cor:Examples}
If $M$ is the exterior of a twist knot $J(2,2n)$ or $J(2,2n+1)$ different from the trefoil knot, or the exterior of a "double twist knot" $J(3,2n)$ with negative $n$ (see Figure \ref{fig:triple}), if the residual representation $\brho_\Sigma$ given by an ideal point is non-trivial, then the torsion function is non-constant on the geometric component of the character variety.
\end{cor}

\begin{figure}[h]
\begin{center}
\def\svgwidth{0.8\columnwidth}
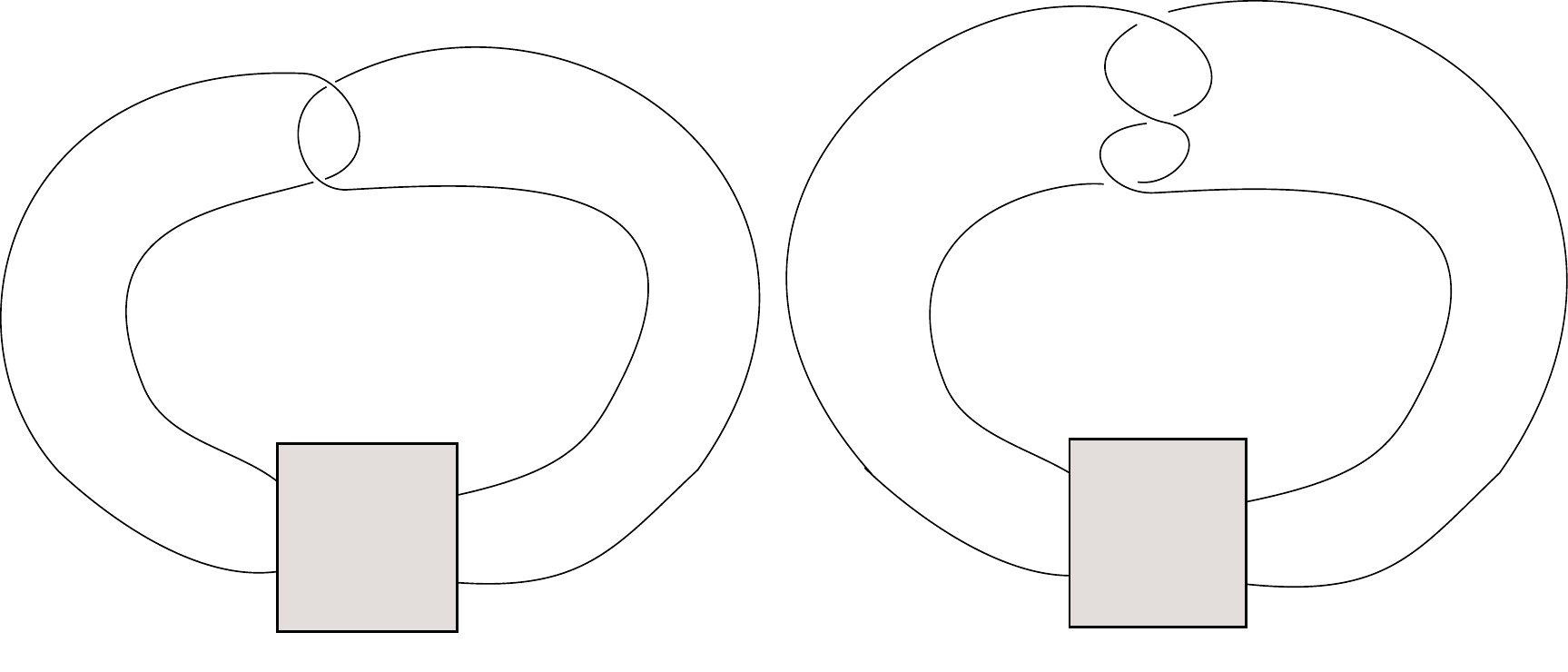
\caption{\label{fig:triple} A diagram of $J(2,2n)$ (respectively $J(2,2n+1)$) on the left, and of $J(3,2n)$ on the right.}
\end{center}
\end{figure}
\begin{proof}
To begin with, the incompressible surfaces obtained by the construction of Culler--Shalen for two-bridge knots complements have been fully classified by Tomotada Ohtsuki in \cite{Ohtsuki}. This result relies on the classification of incompressible surfaces in two-bridges knot complements given by Allen Hatcher and William Thurston in \cite{HT85}. In particular, \cite[Theorem 5.4]{Ohtsuki} implies that those surfaces are necessarily connected or union of parallel connected copies.

Now any twist or double twist knot is known to be hyperbolic, the trefoil knot excepted. Moreover, the geometric component is the unique component (of irreducible type) of their character variety (see \cite{MPvL11}). Hence we need to argue that an incompressible surface satisfying the hypothesis of Corollary \ref{cor:main} is detected. For the figure-eight knot, it is computed explicitly in Remark \ref{eight knot}. For any other $M$, it follows from the computations of the $A$-polynomials (\cite{BMZ97} or  \cite[Figures 3 and 4]{HS04}) that a Seifert surface is detected by an ideal point, since a boundary slope of the $A$-polynomial is a boundary slope of a detected incompressible surface (\cite[Theorem 3.4]{CCGLS}). Since those manifolds are small manifolds, this Seifert surface is free, and the corollary follows.
\end{proof}
\medbreak

Since we expect the torsion to be non-constant on the geometric component of any small hyperbolic three manifold, we ask the following question, compare with the end of \cite[Subsection 8.4]{DFJ}:
\begin{question}
Is it true that a geometric component of a small hyperbolic three manifold detects necessarily an incompressible surface whose boundary curve has eigenvalue 1?
\end{question}
Two-bridge knots are known to contain non-Seifert incompressible surfaces with only 2 boundary components, in this case the eigenvalue of the boundary curve is $\pm1$. Yet, as mentioned above, the boundary curve of any Seifert surface has eigenvalue equal to 1 through the residual representation. The figure-eight knot provides an example of a non-Seifert detected surface with eigenvalue equal to 1. 
\begin{remk}
The character variety of torus knots is known to be a union of irreducible curves. Each of the surfaces detected by an ideal point in a component of irreducible type is an incompressible annulus. In particular it has an infinite cyclic fundamental group, so that the residual representation is either trivial, either the boundary curve is non-trivial. Hence Corollary \ref{cor:main} does not apply; as mentioned before, the torsion function is known to be constant in this situation.
\end{remk}
\begin{nota}
In this article we are interested in the following situation: $M$ is a 3-manifold, compact, connected and orientable, with toroidal boundary $\partial M = \mS^1 \times \mS^1$ and with rational homology of a circle. We fix $k$ an algebraically closed field of characteristic 0. We study twisted cohomology groups given by some representations $\rho \colon  \pi_1(M) \to \Sldeux(K)$, where $K$ is the function field of some variety defined over $k$. Given $v \colon K^* \to \mZ$ a discrete $k$-valuation, we will denote by $\cO_v \subset K$ the valuation subring of $K$, with residual field $k$.  We use the following notations:
\benu
\item
We denote by  $H^*(M, \rho)$ the twisted cohomology $K$-vector spaces, with $\pi_1(M)$ acting on $K^2$ through $\rho$.
\item
Whenever the representation $\rho \colon  \pi_1(M) \to \Sldeux(\cO_v)$ has image in $\Sldeux(\cO_v)$, we denote by $H^*(M, \rho)_v$ the twisted $\cO_v$-modules with coefficients in $\cO_v^2$.
\item
In this case, we denote by $\brho \colon  \pi_1(M) \to \Sldeux(k)$ the composition of $\rho$ with the residual map $\cO_v \to k$, and by 
 $H^*(M, \brho)$ the twisted $k$-vector spaces for the action of $\pi_1(M)$ on $k^2$.
\item
Given a homomorphism $\la \colon  \pi_1(M) \to  k^\ast$, we denote by $H^\ast(M,\la)$ the twisted cohomology groups with action of $\la$ by multiplication on the field $k$. Of course, when $\la$ is constant equal to $1$, we keep the notation $H^\ast(M,k)$.
\eenu
\end{nota}

This paper is organized as follows,  in Section \ref{sec:1} we give the basics about character varieties that will be used along this work, in Section \ref{tree} we give an overview of the Culler-Shalen theory and present the tree-theoretical arguments that lead to the results of this article, in Section \ref{Reidem} we review some various definitions of the Reidemeister torsion, and  we define the torsion function. Then in Section \ref{sec:Finite} we prove Theorem \ref{acyclic finite} and compute a series of examples, and in Section \ref{sec:Ideal} we prove Theorem \ref{acyclic ideal}.

\subsubsection*{Acknowledgement}
This work is part of the author's PhD dissertation, which has been conducted in Sorbonne-Universit\'e, Institut de Math\'ematiques de Jussieu -- Paris Rive Gauche. The author has benefited of a constant support from his advisor Julien March\'e, and thanks him for his time and help. He thanks Teruaki Kitano and Joan Porti for having suggested to study the acyclic torsion, and for sharing with him helpful advices on the topic. Finally, we thank the anonymous referees for their numerous remarks and comments that have contributed to improve the writing of the paper. In particular, comments of an anonymous referee on a previous version of this manuscript have led to a complete rewriting of Lemma \ref{lem:NonSplit} and of Subsection \ref{non-split case}. We thank him/her for that.

\section{Character varieties}
\label{sec:1}
In this section we give definitions relative to character varieties of the fundamental group of a 3-manifold $M$ (Subsection \ref{subs:character}) and we define the tautological representation (Subsection \ref{taut}).  We end this section with examples in Subsection \ref{subs:Examples}. A more detailed treatment of what follows can be found in the first section of \cite{Ben16}.
\medbreak

\subsection{Character varieties and irreducible characters}
\label{subs:character}
In this subsection we define the character variety of a 3-manifold, and discuss the notion of irreducibility for characters.
\medbreak
\begin{defi}
Let $M$ be a 3-manifold, and $\pi_1(M)$ its fundamental group. We define the character variety of $M$ as the algebraic set of conjugacy classes of representations $\rho \in \Hom(\pi_1(M), \Sldeux(k))$, namely it is the algebro-geometric quotient $X(M) = \Hom(\pi_1(M),\Sldeux(k))// \Sldeux(k)$. This quotient can be described as the set of equivalence classes of representations $\rho \colon \pi_1(M) \to \SL_2(k)$, for the equivalence relation given by
$$\rho \sim \rho'  \text{ if for all } \gamma \in \pi_1(M), \ \Tr \rho(\gamma) = \Tr \rho'(\gamma).$$
\end{defi}
The following theorem allows us to identify functions on the character variety with the so-called trace functions.

\begin{theo}[\cite{P87}]
The algebra $$B[M] = k[Y_\g, \g \in \pi_1(M)]/(Y_e-2,Y_\g Y_\de - Y_{\g\de}-Y_{\g\de^{-1}}, \g, \de \in \pi_1(M) )$$ is isomorphic to the function algebra $k[X(M)]$.
\end{theo}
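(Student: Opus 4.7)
The plan is to construct an explicit morphism $\Phi \colon B[M] \to k[X(M)]$ by sending the generator $Y_\gamma$ to the trace function $t_\gamma \colon \chi \mapsto \chi(\gamma)$, and then to establish that $\Phi$ is well-defined, surjective, and has trivial kernel.

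Well-definedness amounts to checking that the two families of defining relations hold among trace functions. The relation $Y_e = 2$ is immediate from $\mathrm{tr}(\mathrm{Id})=2$. The three-term relation follows from the classical $\Sldeux$ trace identity $\mathrm{tr}(A)\mathrm{tr}(B) = \mathrm{tr}(AB)+\mathrm{tr}(AB^{-1})$, which itself is a direct consequence of Cayley--Hamilton applied to $A$ in $\Sldeux$: multiplying $A+A^{-1}=\mathrm{tr}(A)\,\mathrm{Id}$ by $B$ and taking traces produces exactly the claimed identity.

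For surjectivity, I would fix a finite generating system $\gamma_1,\dots,\gamma_n$ of $\pi_1(M)$, view $\Hom(\pi_1(M),\Sldeux(k))$ as a closed subscheme of $\Sldeux(k)^n$, and invoke the Procesi--Sibirskii--Razmyslov fundamental theorem of invariant theory for $\Sldeux$: the ring of $\Sldeux$-invariants under simultaneous conjugation is generated by traces of words in $\rho(\gamma_1),\dots,\rho(\gamma_n)$ (in the $\Sldeux$-case one may in fact restrict to products of length at most three). Each such generator lies in the image of $\Phi$, so $\Phi$ is surjective. This is the content invoked in \cite{P87}.

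Injectivity is the main obstacle, since one must show that the fundamental trace identity together with $Y_e=2$ already exhausts all polynomial relations among trace functions. The natural approach is an induction on word length: using the three-term relation one proves that for any word $w$ in the $\gamma_i^{\pm 1}$, the class of $Y_w$ in $B[M]$ can be expressed as a polynomial in the finitely many $Y_{\gamma_{i_1}\cdots\gamma_{i_p}}$ with $p\le 3$. One then compares with the invariant ring on $\Sldeux^n$: by the previous step the algebra generated in $k[\Sldeux^n]^{\Sldeux}$ by the analogous trace functions is the full invariant ring, and the quadratic trace relation is in fact the only relation they satisfy on the Zariski dense set of irreducible $n$-tuples (a fact that can be checked from the description of the $\Sldeux$-representation scheme by the Fricke coordinates when $n\le 2$, and by induction on $n$ reducing length-three traces to length-$\le 2$ ones, see \cite{P87}). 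Pulling back this relation-theoretic result to $B[M]$ via the surjection from the free algebra on the $Y_{\gamma_{i_1}\cdots\gamma_{i_p}}$ shows that the kernel of $\Phi$ is trivial, completing the proof.
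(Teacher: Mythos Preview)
The paper does not prove this statement; it merely records it with the citation \cite{P87}. So there is nothing to compare your argument against except the literature you yourself invoke.

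Your well-definedness and surjectivity steps are correct and standard. The gap is in injectivity. What you actually establish is the free-group case: $B[F_n]\simeq k[\Sldeux^n]^{\Sldeux}=k[X(F_n)]$, which is indeed the content of Procesi's first fundamental theorem together with the second fundamental theorem (all relations among traces are consequences of the Cayley--Hamilton identity). But your final sentence, ``pulling back this relation-theoretic result to $B[M]$ via the surjection from the free algebra on the $Y_{\gamma_{i_1}\cdots\gamma_{i_p}}$'', does not bridge the gap to an arbitrary $\pi_1(M)$. Concretely, you have a commuting square
\[
\begin{array}{ccc}
B[F_n] & \xrightarrow{\ \sim\ } & k[X(F_n)]\\
\downarrow & & \downarrow\\
B[M] & \xrightarrow{\ \Phi\ } & k[X(M)]
\end{array}
\]
with both vertical maps surjective. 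To conclude that $\Phi$ is injective you must show that the two vertical kernels coincide under the top isomorphism, i.e.\ that the invariant ideal $J_M^{\Sldeux}\subset k[X(F_n)]$ cutting out $X(M)$ is generated by the differences $t_w-t_{w'}$ for words $w,w'$ equal in $\pi_1(M)$. This is not automatic: the ideal $J_M\subset k[\Sldeux^n]$ is generated by non-invariant matrix entries of the relators, and passing to $J_M^{\Sldeux}$ via the Reynolds operator does not obviously produce only trace differences. You have asserted this step rather than argued it.

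The standard way to close this gap (and essentially what \cite{P87}, or alternatively Brumfiel--Hilden or Saito, provides) is a representability statement: for every commutative $k$-algebra $A$, each $k$-algebra map $\tau:B[M]\to A$---equivalently each function $\tau:\pi_1(M)\to A$ with $\tau(e)=2$ and $\tau(\gamma)\tau(\delta)=\tau(\gamma\delta)+\tau(\gamma\delta^{-1})$---is the trace of an actual representation $\pi_1(M)\to\Sldeux(B)$ over some extension $A\hookrightarrow B$. This identifies the functors of points of $\Spec B[M]$ and of $X(M)$ and hence the rings. If you want to keep your free-group strategy, you should insert this representability lemma (or an explicit argument that $J_M^{\Sldeux}$ is generated by trace differences, using reductivity and the observation that $\Tr\bigl((r_j(A)-\Id)\,w(A)\bigr)=t_{r_jw}-t_w$ already spans $J_M^{\Sldeux}$); as written, the last step is a restatement of the theorem rather than a proof.
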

\begin{remk}
The relation in the algebra $B[M]$ arises from the well-know trace relation in $\SL_2(\mC)$:
for any matrices $A$ and $B$ in $\SL_2(\mC)$, the following holds
\begin{align}
\label{eq:TraceRelation}
\Tr A \Tr B = \Tr (AB)+\Tr(AB^{-1})
\end{align}
\end{remk}
\begin{defi}
A $k$-\textit{character} is an algebra morphism $\chi \colon  B[M] \to k$ (it is a $k$-point of the character variety in the sense of algebraic geometry). Any representation $\rho \colon  \pi_1(M) \to \Sldeux(k)$ induces a $k$-character $\chi_\rho \colon  B[M] \to k$ that sends $Y_\g$ to $\Tr(\rho(\g))$. 
\end{defi}
\begin{remk}
This definition generalizes to $R$-characters for any $k$-algebra $R$. When $R=k$, we will frequently omit the $k$ and say simply a character.
\end{remk}

\begin{defi}
Let $\mK$ be a field extension of $k$. A representation $\rho\colon \pi_1(M)\to~\Sldeux(\mK)$ is \textit{reducible} if there exists a $\rho(\pi_1(M))$-invariant line in $\mK^2$,  else it is \textit{irreducible}. Moreover, such a representation will be said \textit{absolutely irreducible} if it is irreducible in an algebraic closure of $\mK$.
\end{defi}
The following standard lemma tells us that this notion can be defined at the level of characters.
For any elements $\al, \be \in \pi_1(M)$, we use the standard notation $[\al,\be]$.
 for the commutator $\al\be\al^{-1}\be^{-1}$.
\begin{lem}[\cite{CS83,Mar15}]
\label{lem:Commutators}
A representation $\rho\colon  \pi_1(M) \to~ \Sldeux(\mK)$ is absolutely irreducible if and only if there exists $\alpha, \beta$ in $\pi_1(M)$ such that $\Tr(\rho(\al\be\al^{-1}\be^{-1})) \neq 2$.
\end{lem}

\begin{defi}
For any $\al, \be  \in \pi_1(M)$, we denote by $\Delta_{\al,\be} \in B[M]$ the function $Y_\al^2+Y_\be^2+Y_{\al\be}^2-Y_\al Y_\be Y_{\al\be}-4=Y_{[\al,\be]}-2$. For any $k$-algebra $R$, we will say that an $R$-character $\chi$ is \textit{irreducible} if there exists $\al, \be \in \pi_1(M)$ such that $\chi(\Delta_{\al, \be}) \neq 0$. If not, we say that it is \textit{reducible}. A character $\chi$ will be said \textit{central} if $\chi(Y_\g)^2=4$ for any $\g \in \pi_1(M)$.
\end{defi}

\begin{defi}\label{irreducible type}
An irreducible component $X \subset X(M)$ that contains only reducible characters will be said \textit{of reducible type}. A component that contains an irreducible character (equivalently, a dense open subset of irreducible characters, since being reducible is a Zariski closed condition) will be said \textit{of irreducible type}. 
\end{defi}

 The following proposition will be of crucial use in the next section.

 \begin{prop}{\cite{S93}\cite[Proposition 3.4]{Mar15}} \label{repr}
Let $\mK$ be either the algebraically closed field $k$ or a transcendental extension of degree one of $k$. Then the $\mK$-irreducible characters of $X(M)$ correspond bijectively to $\Gldeux(\mK)$-conjugacy classes of absolutely irreducible representations $\rho\colon \pi_1(M)\to \Sldeux(\mK)$.
\end{prop}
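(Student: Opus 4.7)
The plan is to construct the natural map sending a $\Gldeux(\mK)$-conjugacy class of irreducible representations $\rho:\pi_1(M)\to\Sldeux(\mK)$ to its character $\chi_\rho$ and to show it is a bijection onto the irreducible $\mK$-characters. Well-definedness is immediate since conjugation preserves traces, and by the preceding lemma the image lands inside the irreducible characters (since irreducibility of $\rho$ is equivalent to $\chi_\rho(\Delta_{\alpha,\beta})\neq 0$ for some $\alpha,\beta$). Hence the work is injectivity and surjectivity.

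For injectivity, I take irreducible $\rho_1,\rho_2$ with the same character $\chi$, and pick $\alpha,\beta$ with $\chi(\Delta_{\alpha,\beta})\neq 0$. A direct computation identifies $\Delta_{\alpha,\beta}$ (up to sign) with the Gram determinant of $\{I,\rho_i(\alpha),\rho_i(\beta),\rho_i(\alpha\beta)\}$ for the trace pairing on $M_2(\mK)$; so this quadruple is a $\mK$-basis. Expanding $\rho_i(\gamma)$ in this basis, the coordinates are rational in $\chi(Y_\gamma),\chi(Y_{\alpha\gamma}),\chi(Y_{\beta\gamma}),\chi(Y_{\alpha\beta\gamma})$ and in $\chi(\Delta_{\alpha,\beta})^{-1}$, so depend only on $\chi$. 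Once a single $\Gldeux(\mK)$-conjugation matches $(\rho_1(\alpha),\rho_1(\beta))$ with $(\rho_2(\alpha),\rho_2(\beta))$, the whole representations coincide.

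For surjectivity I would first treat $\mK$ algebraically closed. Given an irreducible $\chi$, pick $\alpha,\beta$ with $\chi(\Delta_{\alpha,\beta})\neq 0$, choose $a\in\mK$ with $a+a^{-1}=\chi(Y_\alpha)$, and set $\rho(\alpha)=\operatorname{diag}(a,a^{-1})$. The constraints $\Tr(\rho(\beta))=\chi(Y_\beta)$, $\Tr(\rho(\alpha)\rho(\beta))=\chi(Y_{\alpha\beta})$ determine $\rho(\beta)$ up to a diagonal conjugation commuting with $\rho(\alpha)$, which I use to normalize. For arbitrary $\gamma$, define $\rho(\gamma)$ by the basis expansion of the previous paragraph; that $\rho(\gamma)\in\Sldeux$ is forced by the relation $Y_{\gamma^2}=Y_\gamma^2-2$ inside $B[M]$, and multiplicativity $\rho(\gamma\delta)=\rho(\gamma)\rho(\delta)$ is enforced by the Cayley--Hamilton/Fricke identity $Y_{\gamma\delta}+Y_{\gamma\delta^{-1}}=Y_\gamma Y_\delta$ built into $B[M]$.

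When $\mK$ is a transcendence-degree-one extension of an algebraically closed field $k$, the construction above, applied over $\bar\mK$, produces $\tilde\rho:\pi_1(M)\to\Sldeux(\bar\mK)$ realizing the base change of $\chi$. By the injectivity already proved over $\bar\mK$, for each $\sigma\in\operatorname{Gal}(\bar\mK/\mK)$ the conjugate $\sigma\cdot\tilde\rho$ is $\Gldeux(\bar\mK)$-conjugate to $\tilde\rho$ by some $g_\sigma$, yielding a cocycle in $H^1(\operatorname{Gal}(\bar\mK/\mK),\operatorname{PGL}_2(\bar\mK))$. Its obstruction to lifting to a $\Gldeux$-cocycle sits in $\Br(\mK)$, which vanishes by Tsen's theorem since $\mK$ is $C_1$; Hilbert 90 then provides $h\in\Gldeux(\bar\mK)$ such that $h\tilde\rho h^{-1}$ is Galois-invariant, hence takes values in $\Sldeux(\mK)$. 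I expect this Galois-descent step to be the main obstacle; the construction over the algebraic closure and the injectivity are essentially linear algebra together with the standard trace identities of $B[M]$.
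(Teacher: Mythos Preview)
The paper does not prove this proposition; it is quoted from \cite{S93, Mar15} without argument. Your outline is essentially the standard proof found there: injectivity via the trace form and the basis $\{I,\rho(\alpha),\rho(\beta),\rho(\alpha\beta)\}$ of $M_2(\mK)$, surjectivity over an algebraically closed field by an explicit construction, and descent to a transcendence-degree-one extension using Tsen's theorem to kill the class in $\Br(\mK)$ coming from the $\mathrm{PGL}_2$-torsor of conjugations. That is precisely the strategy in the cited references, so there is nothing to compare.

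One small repair is needed in your surjectivity step. When $\chi(Y_\alpha)=\pm 2$ you cannot take $\rho(\alpha)=\operatorname{diag}(a,a^{-1})$: this would force $\rho(\alpha)=\pm I$, hence $\Tr\rho([\alpha,\beta])=2$, contradicting $\chi(\Delta_{\alpha,\beta})\neq 0$. In that case $\rho(\alpha)$ must be chosen parabolic, say $\pm\bigl(\begin{smallmatrix}1&1\\0&1\end{smallmatrix}\bigr)$, and the normalisation of $\rho(\beta)$ has to be adjusted accordingly. Apart from this case distinction, and the (routine but not entirely trivial) verification that the map $\gamma\mapsto\rho(\gamma)$ defined by basis expansion really is multiplicative and kills the relations of $\pi_1(M)$, the argument is correct.
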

\subsection{The tautological representation}
 \label{taut}
 In this subsection we define the so-called tautological representation. It has a long story in the study of character varieties, see among others \cite{CS83,CGLS,CCGLS,DFJ}, and for instance \cite{FKN} for character varieties in higher rank groups. It will be our main tool to define the Reidemeister torsion globally on the character variety.
\medbreak
Let $X$ be an irreducible component of $X(M)$ of irreducible type (in the sense of Definition \ref{irreducible type}).
The component $X$ corresponds to a minimal prime ideal $\frp$ of $B[M]$ such that $k[X] =B[M]/\frp$ is the algebra of functions of $X$. Denote by $k(X)$ the fraction field of $k[X]$, it is called the function field of $X$. Let $\chi_X$ the composition $\chi_X \colon B[M] \to k[X] \to k(X)$. Since $X$ is of irreducible type, $\chi_X$ induces an irreducible $k(X)$-character.
The following is an immediate consequence of Proposition \ref{repr}, since the function field of any one-dimensional variety over $k$ has transcendance degree 1 over $k$.

\begin{prop}\label{prop taut}
Let $X$ be a one-dimensional component of irreducible type of~$X(M)$. Then there is an absolutely irreducible representation, denoted by $\rho_X: \pi_1(M) \to~\Sldeux(k(X))$, whose character is $\chi_X$. It is called the \textit{tautological representation} and it is defined up to $\GL_2(k(X))$-conjugation, 
\end{prop}

\subsection{Examples}
\label{subs:Examples}
In this subsection we compute the character varieties, together with a tautological representation, of the exteriors of the trefoil and the figure-eight knot.

\subsubsection{The trefoil knot}
Let $K$ be the trefoil knot in $\mS^3$, and $M$ be its complement $M= \mS^3\setminus K$. A presentation of its fundamental group is $\pi_1(M) =~\langle a,b \vert \ a^2 = b^3\rangle$. 

The element $z=a^2=b^3$ generates the center of $\pi_1(M)$. Any irreducible representation $\rho \colon \pi_1(M) \to \SL_2(k)$ needs to map $z$ into the center of $\SL_2(k)$ which is $\lbrace \pm \Id \rbrace$ (because $\rho(z)$ must commute with any $\rho(\gamma)$ for $\gamma$ in $\pi_1(M)$, and if $\rho(z) \neq \pm \Id$ it will contradict the irreducibility of $\rho$). 

If $\rho(z)=\Id$, then $\rho(a)=-\Id$ and necessarily $\rho$ is abelian, thus we can assume that $\rho(z)=-\Id$. Up to conjugacy, we can fix $\rho(b) =\bsm -j & 0\\ 0 & -j^2 \esm$ where $j$ is a primitive sixth root of unity. One can still conjugate $\rho$ by diagonal matrices without modifying $\rho(b)$, thus one can fix the right-upper entry of $\rho(a)$ to be equal to $1$ (if it was $0$, again, $\rho$ would be reducible).

Since $\rho(a)^2=-\Id$,  the Cayley-Hamilton theorem implies that $\Tr \rho(a) = 0$, hence $\rho(a) = \bsm t & 1\\-(t^2+1) & -t\esm$, for some $t \in k$.
As $(j-j^2)t = \Tr(ab^{-1})$, the function field of the component of irreducible type $X$ is $k(t)$;  and $X \simeq k$. The latter representation $\rho$ is the tautological representation.

\subsubsection{The figure-eight knot}
Let $M$ be the exterior of the figure-eight knot in $\mS^3$, $\pi_1(M)= \langle u,v \vert \ vw=wu \rangle$ with $w=[u,v^{-1}]$. Note that the meridians $u$ and $v$ are conjugated, hence they define the same trace functions. 

Denote by $x= Y_u = Y_v$, and by $y = Y_{uv}$. Then $B[\pi_1(M)] = k[x,y]/(P)$ where $P(x,y) = (x^2-y-2)(2x^2+y^2-x^2y-y-1)$ is obtained by expanding the relation $\Tr vwu^{-1} = \Tr w$ with the help of the trace relation (\ref{eq:TraceRelation}). The first factor of $P$ is the equation of the component of reducible type of $X(M)$ (compare with $\Delta_{u,v}$), and we denote by $X$ the curve of irreducible typer defined by the second factor of $P$. 
The tautological representation $\rho\colon \pi_1(M) \to k(X)[\al]/(\al+\al^{-1}-x)$ can be defined by $$\rho(u) = \bm \al & 1\\0 & \al^{-1} \ema , \rho(v)=\bm \al&0\\y-\al^2-\al^{-2} & \al^{-1} \ema$$
Although Proposition \ref{prop taut} ensures that the tautological representation can be defined directly with coefficients in $k(X)$, we do not know a simple expression of the latter with entries in this field.

\section{Culler-Shalen theory, group acting on trees and incompressible surfaces}\label{tree}
In this section we summarize a part of the Culler--Shalen theory, references include \cite{Sha02, Til03}. In their seminal articles \cite{CS83, CS84}, Marc Culler and Peter Shalen managed to use both tree-theoretical techniques introduced by Hyman Bass and Jean-Pierre Serre in \cite{Ser77} and character varieties to study the topology of 3-manifolds. In Subsections \ref{subs:Tree}, \ref{ends}, \ref{subs:stabilizers} we describe the Bass-Serre tree together with its natural $\Sldeux$ action, in Subsection \ref{valu} we recall some basic theory of algebraic curves and valuations. Finally in Subsection \ref{subs:Splitting} we prove some technical lemmas using Culler-Shalen theory.
\medbreak

\subsection{The tree}
\label{subs:Tree}
Let $\mK$ be an extension of $k$, we define a \textit{discrete $k$-valuation} as a surjective map $v: \mK \to \mZ \cup \lbrace \infty \rbrace$ such that 
\begin{itemize}
\item $v(0)= +\infty$
\item $v(x+y) \geq \operatorname{min}(v(x),v(y))$
\item $v(xy)=v(x)+v(y)$
\item $\forall z \in k, v(z)=0$ and $k$ is maximal for this property.
\end{itemize}
We will denote by $\cO_v=\lbrace x \in \mK\colon  v(x)\geq 0 \rbrace$ \textit{the valuation ring}, and we pick $t \in \cO_v$ an element of valuation 1, that we call a \textit{uniformizing parameter}. The group of invertible elements $\cO_v^*$ is the group of elements whose valuation is zero, $(t)$ is the unique maximal ideal of $\cO_v$ and $\cO_v/(t) \simeq k$ is the \textit{residual field}. Remark that every ideal is of the form $(t^n)$, for some $n\in \mathbb{N}$.

The main exemple to have in mind here is the valuation ring $\mC[[t]]$ of formal series in $t$, with the valuation $v:\mC((t))^*\to~\mZ$, $P\mapsto ~\mathrm{ord}_t(P)$ given by the vanishing order at $t=0$. Here the uniformizing element is $t$ and the residual field is $\mC$.

A \textit{lattice} $L$ in a two dimensional $\mK$-vector space $V$ is a free $\cO_v$-module of rank two that spans $V$ as a vector space.
The group $\mK^*$ acts on the set of lattices in $V$ by homothety . We denote by $T$ the set of equivalence classes of lattices where $L \sim L'$ if there exists $x \in \mK^*$ such that $L'=xL$. 

Now we define an integer-valued distance on $T$. We fix a lattice $L$ together with a basis of $L$, that turns $L$ into the standar lattice $\cO_v^2$. For any class $[L'] \in T$ one can express a basis of an element $L'$ of $[L']$ by a matrix $\bsm a&b\\c&d\esm$ with $ a,b,c,d \in \mK$. In fact, up to homothety we can pick $ a,b,c,d \in \cO_v$, that is $L' \subset L$.  Assume that $a$ has minimal valuation among $\lbrace a,b,c,d \rbrace$, then the matrix $\bsm a&b\\c&d \esm$ can be transformed into $\bsm a&0\\0&d-\frac{bc}{a}\esm$ by $\Sldeux(\cO_v)$ right and left multiplication that preserves the standard lattice $\cO_v^2$. Hence we have $L' \simeq~a\cO_v \oplus (d-~\frac{bc}{a}) \cO_v \simeq t^n\cO_v \oplus t^m \cO_v$ for some $n,m \in \mathbb{N}$. We define $d([L],[L']) =~\vert n-m \vert$. One can check that it defines a distance that does depend only on the equivalence classes of $[L]$ and of $[L']$.

This distance turns $T$ into a graph whose vertices are classes of lattices $[L]$ such that vertices at distance $1$ are linked by an edge. This graph is connected since any two vertices admit representatives $L,L'$ that can be described as $L = \cO_v \oplus \cO_v$ and $L'=\cO_v \oplus t^{d([L],[L'])}\cO_v$, hence a path joining $L$ to $L'$ can be constructed as $L_k = \cO_v \oplus t^k\cO_v$ with $k=0, \ldots, d([L],[L'])$. It can be shown that $T$ is indeed a tree, see \cite{Ser77}.

\subsection{Link of a vertex}\label{ends}
Given a vertex $[L] \in T$, one can describe the set of vertices at distance 1 of $[L]$ as follows: for each such $[L']$ there is a basis of $V$ such that $\cO_v^2$ is a representative of $[L]$ and that there is an unique representative $L'$ of $[L']$ isomorphic to $\cO_v \oplus t\cO_v$ in this basis. Since $tL \subset L'\subset L$, it defines a map 
$\lbrace [L']\colon  d([L],[L'])=1 \rbrace \to k\mP^1$ that sends $L'$ to the line $L'/tL$ in $L/tL \simeq k^2$, which turns out to be a bijection. 


\subsection{The $\Sldeux$ action: stabilizers of vertices, fixed points and translation length}
\label{subs:stabilizers}
There is a natural isometric and transitive action of $\operatorname{GL}(V)$ on $T$, induced by the action of $\operatorname{GL}(V)$ on $V$. We say that an element $g \in \GL(V)$ \textit{stabilizes} a vertex $[L] \in T$ if for any representative $L$ we have $g\cdot L=xL$ for some $x \in \mK^*$.

\begin{defi}
The action of a subgroup $G \subset \operatorname{GL}(V)$ on $T$ will be said \textit{trivial} if a vertex of $T$ is stabilized by the whole group $G$.
\end{defi}
\begin{lem}\label{det tree}
For any $g \in \Gldeux(\mK), [L] \in T$, fix a basis $\lbrace e,f \rbrace$ of $L \in [L]$, and $n,m \in \mZ$ such that $\lbrace t^n e, t^m f\rbrace$ is a basis of $g \cdot L$. Then $v(\det(g))=n+m$.
\end{lem}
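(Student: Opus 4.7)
The plan is to reduce the statement to the multiplicativity of the determinant, using the fact that any two $\cO_v$-bases of a free $\cO_v$-module of rank two differ by an element of $\Gldeux(\cO_v)$, whose determinant is a unit and hence has valuation zero.

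First I would observe that $g \cdot L$ is itself a free $\cO_v$-module of rank two, with an $\cO_v$-basis $\{g\cdot e, g\cdot f\}$ transported from $\{e,f\}$ via the $\mK$-linear isomorphism $g$. By hypothesis, $\{t^n e, t^m f\}$ is another $\cO_v$-basis of the same module $g\cdot L$. Hence there exists a change-of-basis matrix $P \in \Gldeux(\cO_v)$ such that
\[
(t^n e, t^m f) \;=\; (g\cdot e, g\cdot f) \cdot P.
\]

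Next, I would express everything in the fixed $\mK$-basis $\{e,f\}$ of $V$. Writing $M_g$ for the matrix of $g$ in this basis (so that $(g\cdot e, g\cdot f) = (e,f)\cdot M_g$ and $\det M_g = \det g$), the relation above becomes
\[
\operatorname{diag}(t^n, t^m) \;=\; M_g \cdot P
\]
as an identity in $\Gldeux(\mK)$. Taking determinants and then applying the valuation $v$ (which is a group morphism on $\mK^\ast$) yields
\[
n+m \;=\; v(t^{n+m}) \;=\; v(\det g) + v(\det P).
\]
Since $P \in \Gldeux(\cO_v)$, its determinant lies in $\cO_v^\ast$, so $v(\det P) = 0$, which gives the desired equality $v(\det g) = n+m$.

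There is no substantial obstacle here: the only point requiring a bit of care is distinguishing the $\mK$-linear change of basis given by $g$ from the $\cO_v$-linear change of basis $P$ between two bases of the same $\cO_v$-lattice $g\cdot L$, and keeping track of the fact that only the latter automatically has unit determinant. Once this is clear, the lemma follows from a one-line determinant computation.
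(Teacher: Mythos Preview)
Your proof is correct and follows essentially the same idea as the paper: expressing the matrix of $g$ in the basis $\{e,f\}$ as a product of $\operatorname{diag}(t^n,t^m)$ with elements of $\Gldeux(\cO_v)$, and then taking determinants. The paper writes $g = A\,\operatorname{diag}(t^n,t^m)\,B$ with $A,B \in \Gldeux(\cO_v)$ (invoking the Smith-type reduction recalled just before the lemma), whereas you argue more directly that only one factor $P \in \Gldeux(\cO_v)$ is needed, coming from the change of $\cO_v$-basis of $g\cdot L$; the conclusion is identical.
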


\begin{proof}
In this basis, $L \simeq \cO_v^2$, and $g$ can be written as the matrix $A \bsm t^n & 0 \\ 0 & t^m \esm B$ with $ A,B \in \Gldeux(\cO_v)$. The result follows.
\end{proof}

Now we restrict to the $\operatorname{SL}(V)$ action. \begin{lem}
An element $g \in \operatorname{SL}(V)$ stabilizes a vertex $[L]$ iff for any representative $g\cdot L=L$.
\end{lem}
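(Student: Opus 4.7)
The ``only if'' direction is immediate, so my focus is on showing that if some representative $L$ of $[L]$ is sent to a homothetic lattice by $g$, then in fact every representative is fixed. I would first note that the scaling factor is independent of the chosen representative: if $g\cdot L_0 = xL_0$ for one representative $L_0$, then for any other representative $L' = yL_0$ the $\mK$-linearity of $g$ gives $g\cdot L' = y(g\cdot L_0) = yx L_0 = xL'$, with the \emph{same} scalar $x$. So it suffices to prove that $x\in\cO_v^\ast$, for then $xL_0 = L_0$ automatically and one concludes for every representative.

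The crux is to convert the hypothesis $\det g = 1$ into a statement about $v(x)$. I would factor $g = xI\cdot h$, where $h := x^{-1}g \in \operatorname{GL}(V)$. By construction $h$ sends $L_0$ bijectively onto $L_0$ (since $g$ sends $L_0$ onto $xL_0$ and $x^{-1}I$ sends $xL_0$ back to $L_0$), so $h\in\Aut_{\cO_v}(L_0)\simeq \Gldeux(\cO_v)$ and $\det(h)\in\cO_v^\ast$. Taking determinants in $g = xI\cdot h$ yields
\[
1 \;=\; \det(g) \;=\; x^2\,\det(h),
\]
hence $x^2\in \cO_v^\ast$, so $v(x)=0$ and $x\in\cO_v^\ast$. (Equivalently, one can invoke Lemma \ref{det tree} directly: pick an elementary-divisor basis so that $g\cdot L_0$ has basis $\{t^n e, t^m f\}$; comparing with the basis $\{xe,xf\}$ of the same lattice $xL_0$ shows $n=m=v(x)$, and Lemma \ref{det tree} then gives $2v(x)=v(\det g)=0$.)

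With $x$ a unit, multiplication by $x$ preserves $L_0$, so $g\cdot L_0 = xL_0 = L_0$, and the representative-independence established in the first step promotes this to $g\cdot L = L$ for every representative $L$ of $[L]$. I do not anticipate a serious obstacle here: the only mild subtlety is making the ``change-of-lattice'' and ``change-of-scalar'' interact correctly, which is exactly what the factorization $g = xI\cdot h$ accomplishes in a single line.
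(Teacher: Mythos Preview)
Your proof is correct and follows essentially the same route as the paper: both arguments reduce to showing $2v(x)=v(\det g)=0$ (the paper invokes Lemma~\ref{det tree} directly, while you obtain the same identity via the factorization $g=xI\cdot h$ with $h\in\Aut_{\cO_v}(L_0)$, and even note the Lemma~\ref{det tree} alternative yourself). One cosmetic slip: you have the ``if'' and ``only if'' labels swapped --- the trivial direction is that $g\cdot L=L$ for every $L$ implies $g$ stabilizes $[L]$, which is the ``if'' direction of the stated equivalence.
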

\begin{proof}
Assume that $g\cdot L= xL$, then by Lemma \ref{det tree}, $v(\det(g))=2v(x)=0$ hence $x \in \cO^*$ and $xL=L$.
\end{proof}
Since $\Sldeux(\cO_v)$ is the stabilizer of the standard lattice $\cO_v^2$, we deduce the following proposition.
\begin{prop}
\label{prop:Stabilizer}
The stabilizer in $\Sldeux(\mK)$ of any vertex of the tree $T$ is a $\operatorname{GL}_2$-conjugate of $\Sldeux(\cO_v)$.
\end{prop}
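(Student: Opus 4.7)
The strategy is to leverage two ingredients already established in the text: (i) the natural action of $\Gldeux(\mK)$ on $T$ is transitive on vertices, and (ii) the preceding lemma reduces the stabilizer condition from $g\cdot L = xL$ to $g\cdot L = L$, at least when $g \in \Sldeux(\mK)$.

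First I would fix a reference vertex, namely the class $[L_0]$ of the standard lattice $L_0 = \cO_v^2 \subset \mK^2$. By transitivity of the $\Gldeux(\mK)$-action, for any vertex $[L] \in T$ there exists $g \in \Gldeux(\mK)$ with $g \cdot [L_0] = [L]$. A direct computation then shows that conjugation by $g$ identifies the $\Sldeux(\mK)$-stabilizers: if $h \in \Sldeux(\mK)$ stabilizes $[L]$, then $g^{-1} h g$ stabilizes $[L_0]$, and conversely. Hence
\[
\mathrm{Stab}_{\Sldeux(\mK)}([L]) = g \cdot \mathrm{Stab}_{\Sldeux(\mK)}([L_0]) \cdot g^{-1}.
\]

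The remaining step is to identify the stabilizer of the reference vertex as $\Sldeux(\cO_v)$. By the preceding lemma, $h \in \Sldeux(\mK)$ stabilizes $[L_0]$ if and only if $h \cdot L_0 = L_0$ as lattices. Writing $h$ as a matrix in the standard basis of $L_0$, the condition $h \cdot L_0 \subset L_0$ means the entries of $h$ lie in $\cO_v$; since $\det(h) = 1$, Cramer's rule gives $h^{-1} \in \Sldeux(\cO_v)$ as well, which forces the reverse inclusion $L_0 \subset h \cdot L_0$. Thus $\mathrm{Stab}_{\Sldeux(\mK)}([L_0]) = \Sldeux(\cO_v)$, and the proposition follows.

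No real obstacle arises here; the main thing to verify carefully is that an element of $\Sldeux(\mK)$ preserving $L_0$ as a set necessarily has both it and its inverse with entries in $\cO_v$, which is automatic because $\det = 1$ makes the inverse a polynomial (rather than rational) expression in the entries. Everything else is bookkeeping around the transitive action and conjugation.
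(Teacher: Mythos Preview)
Your argument is correct and follows exactly the route the paper takes: the paper's proof is the single sentence ``as $\Sldeux(\cO_v)$ is the stabilizer of the standard lattice $\cO_v^2$, we deduce the proposition,'' which implicitly uses transitivity of the $\Gldeux(\mK)$-action together with the preceding lemma, and you have simply spelled out these steps in full.
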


\begin{remk}
Since for any $g \in \operatorname{SL}(V)$, $v(\det(g))=0$, we know that the distance $d([L], g\cdot [L])=\vert n-m\vert$ is even, in particular $\operatorname{SL}(V)$ acts \textit{without inversion} on $T$, that is it can not fix an edge and exchange its end points.
\end{remk}

\begin{defi}
Given $g \in \operatorname{SL}(V)$, we define the \textit{translation length} of $g$ to be equal to $l(g)=\min\limits_{[L]\in T} d([L],g \cdot [L])$. 
\end{defi}

There are two ways for $g \in \operatorname{SL}(V)$ to act on the tree: 
\benu
\item \textbf{Elliptic elements}

If $g$ has some fixed points, or alternatively $l(g)=0$, then it will be called \textit{elliptic}. In this case, one can choose a basis of $V$ such that $g$ fixes the standard lattice $\cO_v^2$. Hence $g$ is (conjugated to) an element of $\Sldeux(\cO_v)$. The set of fixed points $T_g$ is a subtree of $T$.

\item \textbf{Hyperbolic elements}

If $l(g) > 0$, then $g$ is called \textit{hyperbolic}; $A_g=\lbrace s \in T \ \vert \ d(s,g \cdot s)= l(g) \rbrace$ is an infinite, globally fixed, axis on which $g$ acts by translation, and any basis of $V$ such that the standard lattice $\cO_v^2$ represents a point in  $A_g$ provides the matricial expression $g= \bsm t^{l(g)/2} & 0 \\ 0 & t^{-l(g)/2} \esm$. 

\eenu
A proof of the following lemma can be found in \cite[Corollaire 3, p.90]{Ser77}, see also \cite[Lemma 1.3.7]{BenThesis}
\begin{lem}\label{fix}
Let $G$ be a subgroup of $\operatorname{SL}(V)$ acting on the Bass-Serre tree $T$. If every element $g \in G$ fixes a vertex of $T$, then the whole group has a fixed vertex, that is the action is trivial.
\end{lem}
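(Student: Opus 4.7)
The plan is to reduce to a two-element statement and then to conclude via the Helly property for subtrees of a tree.

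First, I would invoke the standard classification of isometries of a tree acting without inversion, which by the remark preceding the lemma applies to $\operatorname{SL}(V)$: every isometry $g$ is either \emph{elliptic}, with non-empty fixed subtree $T_g \subset T$, or \emph{hyperbolic}, acting as translation by a positive amount $\ell(g) > 0$ along a bi-infinite axis. The hypothesis of the lemma is precisely that every element of $G$ is elliptic.

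The technical heart of the proof is a two-element statement: if $g, h \in G$ are both elliptic but with $T_g \cap T_h = \emptyset$, then $gh$ is hyperbolic, contradicting the hypothesis. To prove this, let $[a,b]$ be the unique geodesic bridge between $T_g$ and $T_h$, with $a \in T_g$, $b \in T_h$, and $d(a,b) = d(T_g, T_h) > 0$. The key elementary fact is that for an elliptic isometry $g$ of a tree and any vertex $y$, the geodesic $[y, gy]$ passes through the projection $p_g(y)$ of $y$ onto $T_g$ and has length $2 d(y, T_g)$. Tracking the image of $a$ under $gh$ via this fact twice --- first $h$ sends $a$ to a vertex at distance $2d(a,b)$ from $a$ on the ray extending $[a,b]$ past $b$, then $g$ bounces this image back through $a$ --- one computes $d(a, gh(a)) = 2 d(a,b)$, with the motion taking place along the bi-infinite extension of $[a,b]$. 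A parallel computation for $b$ shows that $gh$ acts as a pure translation of amplitude $2d(a,b)$ along this line, so $\ell(gh) = 2d(a,b) > 0$.

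Having excluded disjoint fixed subtrees, the family $\{T_g : g \in G\}$ consists of pairwise intersecting non-empty subtrees. I would then appeal to the Helly property for subtrees of a tree: any finite subfamily has a common point. Since in all applications of this paper, $G$ is finitely generated (it arises as the image under a representation of $\pi_1(M)$, which is finitely generated), we may pick generators $g_1, \ldots, g_n$ and obtain a common vertex $v \in T_{g_1} \cap \cdots \cap T_{g_n}$, necessarily fixed by all of $G$. The main obstacle is the two-element computation: identifying the axis of $gh$ and verifying that the element acts on it as a genuine translation requires some care to rule out phantom fixed vertices off the constructed line. Once this is settled, the passage from pairs to the whole finitely generated group is the routine Helly argument.
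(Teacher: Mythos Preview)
The paper does not supply its own proof of this lemma; it merely cites \cite[Corollaire 3, p.~90]{Ser77}. Your argument is essentially the classical one found there (and in many later references): the product of two elliptic isometries with disjoint fixed-point sets is hyperbolic, so under the hypothesis all fixed subtrees $T_g$ pairwise intersect, and one concludes by Helly for finitely many subtrees applied to a generating set. So as a comparison there is nothing substantive to say: your route and Serre's coincide.

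One point deserves emphasis rather than apology. You restrict to finitely generated $G$, justifying this by the intended application to $\pi_1(M)$. This is not a concession but a necessity: the lemma as literally stated in the paper is \emph{false} without it. The full upper unipotent subgroup
\[
U=\left\{\begin{pmatrix}1&a\\0&1\end{pmatrix}:a\in\mathbb{K}\right\}\subset\operatorname{SL}(V)
\]
has every element elliptic (the element with entry $a$ fixes the lattice $t^{v(a)}\mathcal{O}_v\oplus\mathcal{O}_v$ whenever $v(a)$ is finite), yet $U$ fixes no vertex---it only fixes an end of $T$. Serre's corollary is stated for finitely generated groups, and that hypothesis is what your argument uses and what the paper's application to $\pi_1(M)$ actually needs. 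So your version is more precise than the paper's.

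A minor remark on the two-element computation: your description of where $h(a)$ lands (``on the ray extending $[a,b]$ past $b$'') is not quite right in general---$h$ may rotate the bridge around $b$ rather than extend it. What is true, and sufficient, is that the geodesic $[a,h(a)]$ passes through $b$ with $d(a,h(a))=2d(a,b)$, and then the geodesic $[h(a),gh(a)]$ passes through $a$; concatenating gives a geodesic segment of length $4d(a,b)$ on which $gh$ translates by $2d(a,b)$. This is enough to conclude $\ell(gh)>0$.
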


\subsection{Curves and valuations} \label{valu}
%
Examples of field extensions of $k$ together with $k$-valuations are given by algebraic varieties defined over $k$. In particular, pick $X \subset~X(M)$ an irreducible component of the character variety, its function ring $k[X]$ is a domain, and we denote by $k(X) = \operatorname{Frac}(k[X])$ its quotient field, called the \textit{function field} of $X$. It is a general fact that this field is a $k$-valuated field, with valuations corresponding to divisors $W \subset X$ (codimension one subvarieties). We will be interested in the case where $X$ is one-dimensional, and we refer to \cite{Ful} for details on what follows: there exists an unique curve $\h{X}$, which is smooth and compact, called the \textit{smooth projective model} of $X$, with a birational map $\nu\colon \h{X} \dashrightarrow X$ that is an isomorphism between open subsets and induces a canonical field isomorphism $\nu^*\colon  k(X) \xrightarrow{\sim} k(\h{X})$. There is a homeomorphism
\begin{align*}
\h{X} &\to \lbrace k\text{-valuations on } k(X) \rbrace \\
x &\mapsto v_x\colon  f \mapsto \operatorname{ord}_x(f)
\end{align*}
where the set of valuations is endowed with the cofinite topology.
\begin{remk}
When the context will be clear, a curve $X$ being given, we will often denote by $v$ a point in the smooth projective model $\h{X}$.
\end{remk}

\begin{defi}
Let $v \in \h{X}$ be a point in the projective model of $X$. We will say that $v$ is an \textit{ideal point} if $\nu$ is not defined at $v$, equivalently the function ring $k[X]$ is not a subring of $\cO_v$. Otherwise we will call $v$ a \textit{finite point}.
\end{defi}
\begin{ex}
Let $X$ be the plane curve $\lbrace x^2-y^3=0 \rbrace$ in $\mC^2$. It is a singular affine curve, with function ring $\mC[X] = \mC[U,V]/(U^2-V^3)$. The map $\mC[X] \to~\mC[T]$ that maps $U$ to $T^3$ and $V$ to $T^2$ induces a field isomorphism $\operatorname{Frac}(\mC[X]) \simeq ~\mC(T)$. Moreover, it defines a birational map $\nu\colon \mC\mP^1 \to X \subset \mC^2$ by $t \mapsto (t^3,t^2)$. Hence the smooth projective model of $X$ is isomorphic to $\mC\mP^1$ (remark that the singular point $(0,0)$ is "smoothed" through $\nu$), and the ideal point is $\infty$. Seen as a map $\mC\mP^1 \to~\mC\mP^2$, $\nu$~sends $[1 \colon0]$ to $[1\colon0\colon0]$, the curve $X \cup \lbrace \infty \rbrace \subset \mC\mP^2$ is a compactification of $X$.
\end{ex}
\subsection{Group acting on a tree and splitting}
\label{subs:Splitting}
In this subsection we summarize how the Culler--Shalen theory associates an incompressible surface $\Sigma \subset M$ to an ideal point in the character variety of $M$.
\medbreak

Recall that in this paper $M$ is a $3$-manifold whose boundary is a torus. We assume that $X$ is a one-dimensional irreducible component of irreducible type of the character variety $X(M)$. Let $\rho\colon \pi_1(M) \to~\Sldeux(k(X))$ be a choice of a tautological representation. Let $v\in \h{X}$ be a point in the smooth projective model of $X$, the pair $(k(X),v)$ is a $k$-valuated field, and we denote by $T_v$ the Bass-Serre tree described above. The group $\pi_1(M)$ acts simplicially on $T_v$ as a subgroup of $\Sldeux(k(X))$ through the tautological representation $\rho$. Note that although the tautological representation is defined up to conjugation, the action of $\pi_1(M)$ on the Bass-Serre tree is well-defined.

\begin{lem}\label{split}
The action of $\pi_1(M)$ on $T_v$ is trivial if and only if $v \in \h{X}$ is a finite point.
\end{lem}
\begin{proof}
By definition, for $v \in \h{X}$ a finite point, the ring $k[X]$ is included in $\cO_v$, which means that $v(Y_\g) \ge 0$ for any $\g \in \pi_1(M)$. Equivalently, $\Tr(\rho(\g)) \in \cO_v$ for any $\g \in \pi_1(M)$, and we want to prove that it is equivalent to $\rho(\g)$ to be conjugated to an element of $\Sldeux(\cO_v)$. It is clear if $\rho(\g) = \pm \Id$, if not there exists a vector $e \in k(X)^2$ such that $\lbrace e, \rho(\g) e \rbrace$ is a basis of the two dimensional vector space $k(X)^2$, and in this basis $\rho(\g)$ acts as the matrix $\bsm 0 & 1\\-1 & \Tr(\rho(\g)) \esm \in \Sldeux(\cO_v)$. Hence we have proved that $v$ is a finite point if and only if for all $\g \in \pi_1(M)$ one has $\rho(\g) \in \Sldeux(\cO_v)$. The proposition follows now from Lemma \ref{fix}.
\end{proof}

It motivates the following definition. 
\begin{defi}
A representation $\rho \colon \pi_1(M) \to \SL_2(k(X))$ whose entries lie in the valuation ring $\cO_v$ will be said \textit{convergent at $v$}. Given a convergent representation $\rho\colon \pi_1(M) \to \SL_2(\cO_v)$, we will denote by $\brho$ the composition $$\brho \colon \pi_1(M) \to \Sldeux(\cO_v) \xrightarrow{\text{mod }t} \Sldeux(k)$$ and we will call it the \textit{residual} representation. 
\end{defi}

Combining Proposition \ref{prop:Stabilizer} and Lemma \ref{split} we obtain immediately the proposition:
\begin{prop}
\label{prop:Convergent}
The tautological representation can be chosen, up to conjugation, to be convergent at a point $v$ in the smooth projective model $\hat{X}$ of $X$ if and only if $v$ is a finite point. In this case it can be chosen as a representation $\rho\colon \pi_1(M) \to \Sldeux(\cO_v)$. 
\end{prop}
Notice that if $v$ corresponds to the character $\chi \in X$, then the residual representation $\brho$ is a lift of $\chi$.

\subsubsection{Ideal points and incompressible surfaces}
Let $v$ be an ideal point in $\h{X}$. We know from Proposition \ref{split} that no representative $\rho$ of the tautological representation can be chosen to be convergent. Now we describe quickly how to construct, from the action of $\pi_1(M)$ on $T_v$ a surface $\Si \subset M$, said \textit{dual} to the action. The reader will find many details about this delicate construction in \cite{Sha02, Til03}.
\medbreak

The main point is to construct a $\pi_1(M)$-equivariant map $f\colon \ti{M} \to T_v$. Pick any triangulation $K$ of $M$, and lift it to a $\pi_1(M)$-invariant triangulation $\ti{K}$ of $\ti{M}$. Then pick a set of orbit representatives $S^{(0)}$ for the action of $\pi_1(M)$ on the set of $0$-simplices of $\ti{K}$, and any map $f_0\colon S^{(0)} \to T_v$ from this set to the set of vertices of $T_v$. It induces an equivariant map from the $0$-squeleton of $\ti{K}$ to $T_v$, that we still denote by $f_0\colon \ti{K}^{(0)} \to T_v$. Now it is possible to extend simplicially this map to the $1$-squeleton, as follows: pick a set of orbit representatives $S^{(1)}$ for the action of $\pi_1(M)$ on the set of $1$-simplices of $\ti{K}$. Any edge $\sigma \in S^{(1)}$ has endpoints mapped to some given vertices through the map $f_0$, and we extend in the obvious way $f_0$ to $\sigma$. Now there is a unique $\pi_1(M)$-equivariant extension $f_1\colon \ti{K}^{(1)} \to T_v$ of $f_0$, it is continuous, and can be made simplicial, up to subdivide the triangulation $\ti{K}$. Repeat this process up to obtain the desired simplicial, equivariant map $f\colon \ti{M} \to T_v$.

Now consider the set of midpoints $E$ of the edges of $T_v$, the set $f^{-1}(E)$ is a surface $\ti{S} \subset \ti{M}$. This surface is non-empty because the action of $\pi_1(M)$ on the tree $T_v$ is non-trivial, and orientable because the map $f$ is transverse to $E$. Moreover it is stable under the action of $\pi_1(M)$ on $\ti{M}$, and hence its image through the covering map $\ti{M} \to M$ is a surface $S \subset M$, non-empty and orientable, dual to the action. It is worth to notice that it has no reason to be connected in general.

\begin{defi}
A surface $\Si$ in a 3-manifold $M$ is said \textit{incompressible} if
\benu
\item
$\Si$ is oriented
\item
For each component $\Si_i$ of $\Si$, the homomorphism $\pi_1(\Si_i) \to \pi_1(M)$ induced by inclusion is injective.
\item
No component of $\Si$ is a sphere or is boundary parallel.
\eenu
\end{defi}
\begin{remk}
A \textit{compression disk} $D \subset M$ is an embedded disk in $M$ such that $\partial D$ lies in $S$ and is not homotopically trivial in $S$. The second condition above is equivalent to saying that there is no compression disk in $M$.
\end{remk}

If $S$ is a surface dual to a $\pi_1$ action on a tree $T$, there is a way to modify the equivariant map $f$ in order to avoid compression disks, spherical and boundary parallel components, and hence to obtain a new surface $\Si$ which  is incompressible. We refer the reader to the references given above, where a proof of this fact can be found.
\begin{remk}
\label{remk:SigmaFaithfull}
The surface group $\pi_1(\Sigma_i)$ of each connected component of $\Sigma$ acts faithfully on the tree $T_v$ by construction, and identifies with the stabilizer of an edge. In particular the tautological representation restricts to a faithfull representation $\rho_\Sigma \colon \pi_1(\Sigma) \to \SL_2(\mC(X))$ (note that if $X$ contains the character of a faithfull representation, then the whole representation $\rho \colon \pi_1(M) \to \SL_2(\mC(X))$ is faithfull). In particular, it will be used in the sequel that if $\pi_1(\Sigma)$ is not abelian, then there exists $\gamma$ in $\pi_1(\Sigma)$ with $\Tr \rho(\gamma) \neq 2$.
\end{remk}
\subsubsection{The separating case.} \label{prime}
Let $\Si$ be an incompressible surface associated to an ideal point $v\in \h{X}$. In this section we suppose that $\Si$ is a union of $n$ parallel copies $\Si_i, i=1, \ldots, n$ and that each copy is separating $M$ into two handlebodies $M=M_1\cup_{\Si_i} M_2$. Consider $V(\Si)$ a neighborhood of $\Si$ homeomorphic to $\Si_1 \times [0,1]$, and we consider the splitting $M = M_1 \cup_{V(\Si)} M_2$. We fix a basepoint $p \in \Si_1$, and we will denote by $\pi_1(\Si)$ the fundamental group of $\Si_1$ based in $p$. We identify $\pi_1(V(\Si))$ to $\pi_1(\Si)$, and the Seifert-Van Kampen Theorem provides the amalgamated product $\pi_1(M)=~\pi_1(M_1)\ast_{\pi_1(\Sigma)}\pi_1(M_2)$. A sketchy picture is drawn in Figure \ref{split surface}.
\begin{figure}[h]
\begin{center}
\def\svgwidth{0.6\columnwidth}
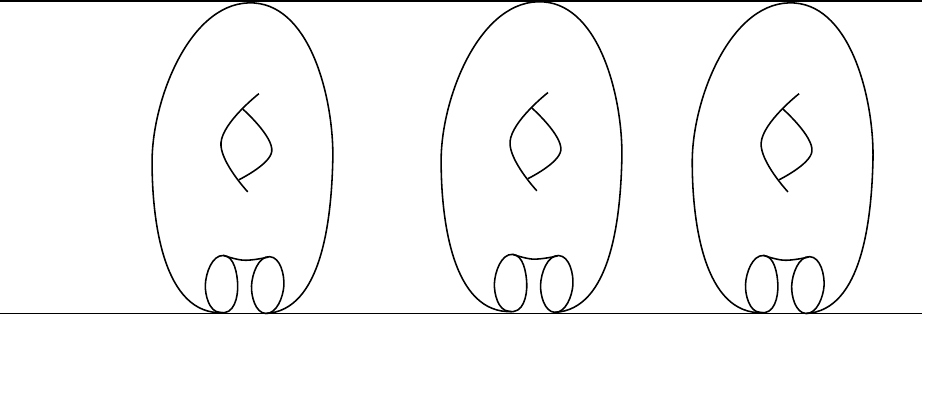
\caption{\label{split surface} The splitting $M = M_1 \cup_{V(\Si)} M_2$.}
\end{center}
\end{figure}
\begin{lem} \label{converge}
There exists a representative of the tautological representation, denoted by $\rho\colon \pi_1(M) \to\Sldeux(k(X))$, satisfying the following:
\benu
\item
The restriction $\rho_1 \colon \pi_1(M_1) \to \SL_2(k(X))$ is convergent, as well is the composition $$\rho_{1,\Sigma} \colon \pi_1(\Sigma) \to \pi_1(M_1) \xrightarrow{\rho_1} \SL_2(\cO_v).$$

\item
If $\rho_2\colon \pi_1(M_2) \to \SL_2(k(X))$ is the restriction of $\rho$ to $\pi_1(M_2)$, then there is a convergent representation $\rho_2'\colon \pi_1(M_2)\to \Sldeux(\cO_v)$ such that $\rho_2 = U_n \rho_2' U_n^{-1}$, with $U_n = \bsm t^n & 0\\0 & 1 \esm$. It induces an other convergent representation of the fundamental group of $\Sigma$ that we denote by $$\rho_{2, \Sigma} \colon \pi_1(\Sigma) \to~\pi_1(M_2) \xrightarrow{\rho'_2} \SL_2(\cO_v).$$
\item
The residual representations $\brho_{1,\Sigma} \colon \pi_1(\Sigma) \to \pi_1(M_1) \to \SL_2(\cO_v) \to \SL_2(k)$ and $\brho_{2,\Sigma} \colon \pi_1(\Sigma) \to \pi_1(M_2)\to \SL_2(\cO_v) \to \SL_2(k)$ obtained by restriction to $\pi_1(\Sigma)$ of either $\rho_1$ or $\rho'_2$ are reducible.
\eenu
 \end{lem}
\begin{proof}
Let $s_1 \in T_v$ be a vertex in the Bass-Serre tree which is fixed by $\rho(\pi_1(M_1))$. Fix a basis such that it corresponds to the lattice $\cO_v^2$. Then there is a vertex $s_2 \in T$, fixed by $\rho(\pi_1(M_2))$, such that $d(s_1,s_2)=n$. Moreover, assume that in this basis $s_2$ has a representative of the form $t^n\cO_v \oplus \cO_v$. The first observation is that $\rho_1(\pi_1(M_1)) \subset \Sldeux(\cO_v)$ because it stabilizes $\cO_v^2$, hence $\rho_1$ is convergent.

Let $\rho'_2 = U_n^{-1} \rho_2 U_n$, then $\rho'_2 \cdot s_1  = U_n^{-1} \rho_2 \cdot s_2 = U_n^{-1} \cdot s_2 = s_1$ and we have proved that the representation $\rho'_2$ also converges.

Since $\rho_1(\pi_1(\Sigma))$ fixes the first edge of the segment $[s_1s_2]$, in this basis it fixes the lattices $\cO_v^2$ and $t\cO_v \oplus \cO_v$, hence for all $\g \in \pi_1(\Sigma)$, $\rho_\Sigma(\g) = \bsm a(\g)& b(\g)\\c(\g) & d(\g) \esm$, with $c(\g) \in (t)$, hence $\brho_{1,\Sigma}$ is reducible, and the reducibility of $\brho_{2, \Sigma}$ follows in the same way.

\end{proof}
\subsubsection{The non-separating case} \label{non-split section}
Let $\Sigma$ be an incompressible surface associated to an ideal point $v \in \h{X}$ which is, again, union of $n$ parallel copies $\Sigma=\Sigma_1 \cup \ldots \cup \Sigma_n$. We assume now that $M \setminus \Sigma_i$ is connected for any $i$.
Let $V(\Sigma_i)$ be a neighborhood of $\Sigma_i$ in $M$, and $E(\Sigma_i) = \overline{M \setminus V(\Sigma_i)}$. It is proved in \cite[Proposition 2]{Oza01} that $E(\Sigma_i)$ is a handlebody if and only if $\pi_1(E(\Sigma_i))$ is free. In this case we say that the surface $\Sigma_i$ is \textit{free}. It is the case, for instance, if $M$ is a small manifold (a manifold that does not contain any closed incompressible surfaces). 

If $M$ is the exterior of a knot $K$ in an integer homology sphere, then any $[\Sigma_i]$ is a generator of $H_2(M, \partial M) \simeq \mZ$ and any connected component $\gamma \subset \partial \Sigma$ is null-homologous. In particular if $\partial \Sigma_i$ is connected, then $\Sigma_i$ is a Seifert surface for $K$. A necessary and sufficient condition for a knot exterior $M$ to contain non-free Seifert surfaces is given in \cite{Oza99}. 

In the sequel we assume that the surface $\Sigma_i$ is free, say of genus $g$, and we denote by $H=E(\Sigma_i)$ the genus $2g$ handlebody complement of $\Sigma_i$. We assume that $\partial V(\Sigma) =~\Sigma_1 \cup \Sigma_n$. We have $M = V(\Sigma) \cup_{\Sigma_1 \cup \Sigma_n} H$, it provides the HNN decomposition $\pi_1(M) = \pi_1(H) \ast_\al$. Here we fix the basepoint $p$ in $\Sigma_1$, and $\al\colon \pi_1(\Sigma_1) \to \pi_1(\Sigma_n)$ an isomorphism between those subgroups of $\pi_1(M)$. This means that we have the presentation $\pi_1(M) = \langle \pi_1(H), v \ \vert \ v \g v^{-1} = \al(\g), \forall \g \in \pi_1(\Sigma_1) \rangle$.

\begin{lem}
\label{lem:NonSplit}
There exists a representative of the tautological representation, denoted by $\rho\colon \pi_1(M) \to \Sldeux(k(X))$ satisfying the following:
\benu
\item
The restrictions $\rho_H:\pi_1(H)\to \Sldeux(k(X))$ and $\rho_1\colon \pi_1(\Sigma_1) \to \Sldeux(k(X))$ are convergent. 
\item
The representation $\rho$ maps $v$ to $\rho(v) = V_n= \bsm t^\frac{n}{2} & 0\\0 & t^\frac{-n}{2} \esm$ in $\SL_2(k(X))$, in particular $n$ is even.
\item
The restriction of the tautological representation $\rho_n\colon \pi_1(\Sigma_n) \to \Sldeux(k(X))$ is equal to $\check{U}_n^{-1} \rho_1 \check{U}_n$, with $\check{U}_n = \bsm 1 & 0\\0 & t^n\esm$.
\item
The residual restricted representation $\brho_1$ has the following form: for all $\gamma$ in $\pi_1(\Sigma)$, $\brho_1(\gamma)=\bsm \lambda(\gamma) & 0 \\ * & \lambda^{-1}(\gamma)\esm$ for some $\lambda \colon \pi_1(\Sigma)\to k^*$. In particular $\brho_1$ is reducible.
\eenu
\end{lem}

\begin{proof}
We fix a vertex $s_1$ in the Bass-Serre tree $T$, that corresponds to the lattice $\cO_v^2$ and is fixed by $\pi_1(H)$. Hence $\rho_H$ is convergent. We denote by $e_1$ the edge in the tree $T$ incident to $s_1$ that is fixed by $\pi_1(\Sigma_1)$, and the parallel copies of $\Sigma_1$ stabilize a series of edges $e_i$ that form a segment in $T$, which has $V_n \cdot s_1$ as an end point, as depicted on the fundamental domain of the tree below.
\begin{center}
\begin{tikzpicture}
\node[circle,fill,label=$s_1$,inner sep=1pt] (s0) at (0,0){};
\node[circle,fill,inner sep=1pt] (s1) at (1.5,0){};
\node[circle,fill,inner sep=1pt] (s2) at (3,0){};
\node[] (s) at (4.5,0){...};
\node[circle,fill,inner sep=1pt] (s3) at (6,0){};
\node[circle,fill,label=$V_n \cdot s_1$,inner sep=1pt] (sn) at (7.5,0){};
\draw (s0)--(s1) node[below left=0.4cm]{$e_1$};
\draw (s1)--(s2) node[below left=0.4cm]{$e_2$};
\draw (s3)--(sn) node[below left=0.4cm]{$e_n$};
\draw (s2)--(s)--(s3);
\end{tikzpicture}
\end{center}
In particular, the representation $\brho_1$ is reducible with the claimed form since it stabilizes both the lattices $\cO_v^2$ and $t\cO_v \oplus \cO_v$. Now the element $v \in \pi_1(M)$ acts on the tree $T$ as a hyperbolic element of translation length equal to $n$: it sends the vertex $s_1$ to $V_n \cdot s_1$ and in particular it has the claimed form. 

By construction, since $\pi_1(\Sigma_n) = v\pi_1(\Sigma_1)v^{-1}$ we have $\rho_n = V_n \rho_1V_n^{-1}$. Note that the action by conjugation of the matrix $V_n$ is the same than the action of the matrix $\check{U}_n^{-1}$, and item 3) follows. It completes the proof.

\end{proof}
\section{The Reidemeister torsion} \label{Reidem}
In this section we define the Reidemeister torsion of a complex (\ref{sub:DefiTorsion}) and use this definition to construct the torsion function as a rational function on the geometric component of the character variety (\ref{sub:Torsion}).
\subsection{Reidemeister torsion}
\label{sub:DefiTorsion}
We give various definitions used for the Redemeister torsion.
References are \cite{Mil66}, \cite[Appendix A]{GKZ94}, \cite[Chapitre 0]{Porti97}. We stress out the fact that we use a convention (namely, how we take the alternating sum in the definition of the determinant of a complex) that corresponds to \cite{GKZ94}, but not to \cite{Mil66}.
\medbreak

\subsubsection{Definition of the torsion}
\label{subsub:Definition}

Given a finite complex $C^*$ of $k$-vector spaces  
$$C^0 \xrightarrow{d_0} C^1 \xrightarrow{d_1} ... \xrightarrow{d_{n-1}} C^n$$
with $\lbrace c^i \rbrace_{i=0...n}$ and $\lbrace h^i \rbrace_{i=0...n}$ families of bases of the vector spaces $C^i$'s and $H^i$'s, one can define the \textit{torsion} of the based complex $\tor(C^*,\lbrace c^i \rbrace, \lbrace h^i\rbrace)$ as an alternating product of determinants. More precisely, consider the exact sequences
$$0 \to Z^i \to C^i \xrightarrow{d_i} B^{i+1} \to 0$$
$$0 \to B^i \to Z^i \to H^i \to 0$$
that define the vector spaces $B^i$, $Z^i$ and $H^i$.
Pick a system of bases $\lbrace b^i\rbrace$ of the $B^i$'s, first one obtains a basis $b^i \sqcup \bar{h}^i$ of $Z^i$ for any $i$, given by any choice of a section $H^i \to Z^i$. Then any section $B^{i+1} \to C^i$ provides a basis of $C^i$ denoted by $ b^i\sqcup \bar{h}^i \sqcup \bar{b}^{i+1}$. Now compare this new basis with the original basis $c^i$, and take the determinant of the change of basis matrix, denoted by $[b^i\sqcup \bar{h^i} \sqcup \bar{b}^{i+1}: c^i]$. One can show that the alternating product of those determinants does not depend on the lifts, neither on the choice of basis $\lbrace b^i \rbrace$. We define 
 $$\tor(C^*,\lbrace c^i \rbrace, \lbrace h^i \rbrace)= \prod_i [b^i\sqcup \bar{h}^i \sqcup \bar{b}^{i+1}: c^i]^{(-1)^i} \in \mK^*/\lbrace \pm 1 \rbrace$$
 
\begin{remk}
It can be seen as a generalization of the determinant: in particular if the complex is just an isomorphism $(C^0,c^0) \xrightarrow{d_0} (C^1,c^1)$, then the torsion $\tor(C^*,\lbrace c^* \rbrace, \emptyset)$ is nothing but the determinant of the invertible matrix of the map $d_0$ in the bases $c^0$ and $c^1$. Note that we defined the torsion up to sign indeterminacy. For the use we will make in this article it makes no difference because we want to study vanishing properties of the torsion, nevertheless we stress out that this sign indeterminacy can be solved in our setting, for instance in \cite{Dub06, DHY09}.
\end{remk}

\subsubsection{The Euler isomorphism}
Given $V$ an $n$-dimensional $\mK$-vector space, its determinant vector space $\det(V)=~\bigwedge^nV$ is defined as its $n$-th exterior power. It is a one-dimensional vector space: if $\lbrace v_1, \ldots, v_n\rbrace$ is a basis of $V$, there is an isomorphism $\det(V) \to \mK$ obtained by sending the vector $v_1\wedge \ldots \wedge v_n$ to $1$. In the following, for $L$ a one-dimensional vector space, the notation $L^{\otimes(-1)}$ will denote the dual vector space $\Hom(L,\mK) =L^*$. 
One has the following lemma:
\begin{lem}\cite[Appendix A, Lemma 5]{GKZ94}
\label{lem:Determinant}
Let $0 \to A \to B \to C \to 0$ be an exact sequence of vector spaces, then there is a natural isomorphism $$\det(A)\otimes \det(C) \simeq \det(B).$$
\end{lem}

\begin{defi}
Let $V^*=\bigoplus V^i$ be a finite dimensional graded $\mK$-vector spaces. The \textit{determinant} of $V^*$ is defined by 
$$\det(V^*) = \bigotimes\limits_i \det(V^i)^{\otimes(-1)^i}.$$
\end{defi}
Given a complex $C^*$, the cohomology of this complex is naturally graded, and we have the following proposition that follows easily from Lemma \ref{lem:Determinant}:
\begin{prop}\cite[Appendix A, Proposition 3]{GKZ94}
\label{prop:Euler}
There is a natural (Euler) isomorphism $\operatorname{Eu} \colon \det(C^*) \xrightarrow{\sim} \det(H^*(C^*))$.
\end{prop}

Now we fix a based complex $(C^*, c^*)$, where for any $i$, the basis is denoted by $c^i =~\lbrace c_1^i, \ldots, c_{n_i}^i \rbrace$. Then we denote by $\bigwedge c^i$ the basis element $c_1^i\wedge \ldots \wedge c_{n_i}^i$ of $\det(C^i)$, and by $\textbf{c} = \bigotimes_i(\bigwedge c^i)^{\otimes(-1)^i}$ the induced basis of the vector space $\det(C^*)$. 
\begin{defi}
\label{defi:Torsion}
The torsion of the based complex $(C^*,c^*)$ is 
$$\tor(C^*,\textbf{c}) = \operatorname{Eu}(\textbf{c}) \in \det(H^*(C^*))/\lbrace\pm1\rbrace.$$
\end{defi}
\begin{remk}
\label{remk:DefinitionsTorsion}
The notation is meaningful: in fact the torsion does not depend on the basis $c^*$ of the complex $C^*$, but only on the basis element $\textbf{c}$ of $\det(C^*)$. Moreover, it coincides with the definition provided in \ref{subsub:Definition} in the following sense:
if $h^*$ is a basis of the graded vector space $H^*(C^*)$, then it defines a basis element $\textbf{h}=~\bigotimes_i (\bigwedge h^i)^{\otimes(-1)^i}$, and one can compare $\operatorname{Eu}(\textbf{c})$ with $\textbf{h}$ in $\det(H^*(C^*))$. It provides an element of $\mK$, that we denote by $[\operatorname{Eu}(\textbf{c}):\textbf{h}]$, and we have
$$\tor(C^*,\lbrace c^i\rbrace,\lbrace h^i \rbrace) = [\operatorname{Eu}(\textbf{c}):\textbf{h}].$$
\end{remk}
\subsubsection{The Cayley formula}
\label{subsub:Cayley}
When $C^*$ is an exact complex, a first occurence of a description of the torsion can be found in the seminal work of Cayley in 1848 (see \cite[Appendix B]{GKZ94} where the original text is retranscribed).

Let $(C^*, c^*)$ be a based complex of $\mK$-vector space of the form
$$0 \to (C^0,c^0) \xrightarrow{d_0} \ldots \xrightarrow{d_{r-1}} (C^r,c^r) \to 0.$$ Assume that this complex is exact: it has trivial homology. In particular the one-dimensional vector space $\det(H^*(C^*))$ is canonically isomorphic to $\mK$. We abusively denote by $\tor(C^*, \textbf{c})$ the element of $\mK^*/\lbrace \pm 1\rbrace$ given by $[\tor(C^*, \textbf{c}):1]$, then the equality of Remark \ref{remk:DefinitionsTorsion} $$\tor(C^*, \lbrace c^* \rbrace, \emptyset) = \tor(C^*, \textbf{c})$$ between the two definitions of the torsion holds.

For each index $i$, the basis $c^i = \lbrace c^i_1, \ldots, c^i_{n_i} \rangle$ can be partitioned into two subsets $c^i_I$ and $c^i_J$ such that $\ker d_i = \langle c^i_I \rangle$. Hence we have $C^i =  \langle c^i_I \rangle \oplus  \langle c^i_J \rangle$ and the map $d_i$ restricts to a linear isomorphism $(d_i)_{I,J}\colon \langle c^i_J \rangle \to \operatorname{im}(d_i) = \langle c^{i+1}_I \rangle$ whose determinant we denote by $\Delta_i$. Of course, those determinants depend on the choices, but it can be shown that their alternated product do not, and we have the proposition:
\begin{prop}\cite[Appendix A, Theorem 14]{GKZ94}
\label{prop:Cayley}
With the preceding notations, the following equality holds:
$$\tor(C^*, \textbf{c}) = \prod\limits_{i=0}^{r-1} \Delta_i^{(-1)^{r-i-1}} \in \mK^*/\lbrace \pm1\rbrace.$$
\end{prop}

\subsection{The torsion function}
\label{sub:Torsion}
Let $X$ be a one-dimensional component of irreducible type of the character variety $X(M)$. In Section \ref{taut} we defined the tautological representation $\rho\colon \pi_1(M) \to \Sldeux(k(X))$, up to conjugation. In particular the torsion of the twisted complex $C^*(M, \rho)$ of $k(X)$-vector spaces is well-defined. It provides a non-zero, defined up to sign, element of the homological determinant vector space \[\tor(M, \rho) \in \det(H^*(M, \rho)) \setminus \lbrace0\rbrace.\] 
We prove the following propostion:
\begin{prop}
If the complex $C^*(M, \rho)$ is acyclic, then the Reidemeister function $\tor(M, \rho) \in k(X)^*$ defines a rational function on the curve $X$.
In particular, it is the case if $M$ is hyperbolic and $X$ contains the character of a lift of a holonomy representation.
\end{prop}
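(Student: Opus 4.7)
The plan is to argue in three steps: well-definedness of the torsion, rationality on $X$, and acyclicity at the generic point for geometric components.

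First, fix a CW-decomposition of $M$; it provides a canonical (up to sign, through the ordering of cells) basis of each cochain module $C^i(M,\rho)$, obtained from cells of the universal cover tensored with $k(X)^2$. The torsion of this based complex lies in $\det H^*(M,\rho)$ as in Section \ref{Reidem}. To see that it does not depend on the chosen conjugate of the tautological representation, I would note that conjugating $\rho$ by $g \in \Gldeux(k(X))$ acts on $C^i$ as $\mathrm{id} \otimes g$, a transformation of determinant $\det(g)^{n_i}$ where $n_i$ is the number of $i$-cells. In the alternating product defining the change in $\det C^*$ this contributes $\det(g)^{\sum_i (-1)^i n_i}$, which equals $1$ since the Euler characteristic of a compact 3-manifold with toroidal boundary vanishes. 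Independence of the CW-structure is the standard simple-homotopy invariance of the torsion.

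Second, once the complex is acyclic the determinant line $\det H^*(M,\rho)$ is canonically $k(X)$ (up to sign), so $\tor(M,\rho)$ becomes a well-defined nonzero element of $k(X)^*/\{\pm 1\}$. By the very definition of the function field, such an element \emph{is} a rational function on the curve $X$.

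Third, to establish the acyclicity hypothesis in the geometric case, I would pick the character $\chi_0 \in X$ of the holonomy representation $\rho_0 : \pi_1(M) \to \Sldeux(\mC)$ and check directly that $C^*(M,\rho_0)$ is acyclic. The vanishing of $H^0$ and $H^3$ is immediate (irreducibility of $\rho_0$ and $\partial M \neq \emptyset$); for $H^1$ and $H^2$ one invokes the classical rigidity statement for holonomies of finite-volume hyperbolic 3-manifolds (Menal-Ferrer–Porti). To transport this vanishing to the generic point, I would use that $\chi_0$ is a finite point of $\h{X}$ (in the sense of Section \ref{valu}), take $v$ the associated valuation, arrange $\rho$ to land in $\Sldeux(\cO_v)$, and compare the two specialisations $C^*(M,\rho)_v \otimes_{\cO_v} k = C^*(M,\brho_0)$ and $C^*(M,\rho)_v \otimes_{\cO_v} k(X) = C^*(M,\rho)$. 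An elementary rank argument on the differentials of the $\cO_v$-complex — the rank over $k(X)$ of a matrix with entries in $\cO_v$ is at least its rank modulo the maximal ideal — then yields $\dim_{k(X)} H^i(M,\rho) \le \dim_k H^i(M,\brho_0) = 0$.

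The main obstacle I expect is this last semi-continuity step: it is conceptually straightforward but requires one to genuinely work with the integral $\cO_v$-complex and to check that the cellular cochain modules are free of the expected rank, so that the two tensor products above coincide with the respective twisted cohomologies. Once this is in place, steps one and two are essentially formal consequences of the setup.
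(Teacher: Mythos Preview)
Your proposal is correct and follows essentially the same route as the paper: the first statement is treated as a formal consequence of the definition of torsion (the paper says as much, while you add the explicit conjugation-invariance check via $\chi(M)=0$), and the geometric case is reduced to the vanishing of $H^1(M,\brho_0)$ at the holonomy character via the Menal-Ferrer--Porti/Raghunathan result, then lifted to the generic point by a semi-continuity argument. The only cosmetic difference is that the paper packages this last step through the Universal Coefficients Theorem (together with the 2-dimensional homotopy type of $M$ to kill degrees $\ge 3$, and $\chi(M)=0$ to reduce to $H^1$), whereas you phrase it as a direct rank inequality on the $\cO_v$-differentials; both encode the same upper semi-continuity of $\dim H^i$ under specialisation.
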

\begin{proof}
Since $C^*(M, \rho)$ is acyclic the determinant $\det(H^*(M, \rho))$ is canonically isomorphic to $k(X)^*$, as claimed at first.

Assume that $M$ is hyperbolic and $X$ contains the character of a lift of a holonomy representation, and let us prove that in this case the complex $C^*(M, \rho)$ is acyclic.

Since $M$ has the same homotopy type as a two-dimensional CW complex, it has no homology in rank greater that 2.

The space of invariants vectors $H^0(M, \rho) =\lbrace z \in k(X)^2 \vert \rho(\g) z=z, \forall \g \in \pi_1(M) \rbrace$ is non-trivial if and only if $\Tr(\rho(\g)) = 2$ for all $\g \in \pi_1(M)$. As the tautological representation is irreducible, there exists an element $\gamma$ in $\pi_1(M)$ such that $\Tr \rho(\gamma) \neq 2$, hence $H^0(M, \rho) = 0$. 

We know that the Euler characteristic $\chi(M)$ is zero, hence it is now enough to prove that $H^1(M, \rho) = 0$. The universal coefficients theorem provides isomorphisms $H^1(M, \rho)_v \otimes k(X) \simeq H^1(M, \rho)$ and $H^1(M, \brho) \simeq H^1(M, \rho)_v \otimes k$, hence it is enough to show that for some $\chi \in X$, one has $H^1(M, \brho) = 0$. It follows from Ragunathan's vanishing theorem (see for instance \cite[Theorem 0.2] {MP10}) that it is the case if $\chi$ is the character of a holonomy representation.
\end{proof}

\begin{remk}
As soon as there exists a character $\chi \in X$ such that $H^1(M, \brho)$ is trivial, the proposition applies and the torsion defines a well-defined function on the curve $X$. If follows from the semi-continuity of the dimension of $H^1(M, \brho)$ on $X$ that in this case, $H^1(M, \brho)$ is trivial for all but a finite numbers $\chi \in X$. It has been the way to define almost everywhere the torsion function on $X$, the novelty here is that it is defined \textit{a priori} as a rational function, even at characters $\chi$ with non-trivial first cohomology groups. Indeed, we will show in the next section that the torsion function vanishes exactly at those points.
\end{remk}
\section{The case of a finite character}
\label{sec:Finite}
In this section we prove Theorem \ref{acyclic finite} in Subsection \ref{sub:Thm1}, then we give a series of examples (Subsection \ref{section examples})
\medbreak

\subsection{Proof of Theorem \ref{acyclic finite}}
\label{sub:Thm1}
In this subsection we prove Theorem \ref{acyclic finite} of the introduction.
\medbreak

Let $X$ be a one-dimensional component of irreducible type of the character variety $X(M)$ and $\rho\colon \pi_1(M) \to \Sldeux(k(X))$ the tautological representation. Let $v$ be a finite point of $X$, then Proposition \ref{prop:Convergent} allows us to pick a convergent representative of the tautological representation $\rho \colon \pi_1(M) \to \SL_2(\cO_v)$.
In this section, and in the rest of this paper, we will be frequently interested in the twisted complex of $\cO_v$-modules $C^*(M, \rho)_v$ induced by this restriction of coefficients. In particular we will denote the corresponding cohomology $\cO_v$-modules by $H^*(M, \rho)_v$. 
\begin{lem}
\label{lem:TorsionModules}
The modules $H^*(M, \rho)_v$ are torsion modules.
\end{lem}
\begin{proof}
Since $C^*(M, \rho)$ is acyclic, the $k(X)$-vector spaces $H^i(M, \rho) \simeq H^i(M, \rho)_v \otimes_{\cO_v} k(X)$ are trivial. Since $\cO_v$ is a domain with fraction field $k(X)$, the result follows.
\end{proof}
Such a torsion $\cO_v$-module $N$ can be written as a finite direct sum $\oplus_i \cO_v/(t^{n_i})$, and we define the length of $N$ by $\len(N) = \sum_i n_i$.

Whenever such a convergent tautological representation $\rho \colon \SL_2(\cO_v)$ is given, it makes also sense to consider the residual complex of $k$-vector spaces $C^*(M, \brho)$, and we will also use those residual cohomology $k$-vector spaces $H^*(M, \brho)$.
\begin{theo}
\label{theo:Finite}
If $v$ is a finite point of $X$, the vanishing order of the torsion function at $v$ is given by:
$$v(\tor(M, \rho) = \len (H^2(M, \rho)_v)$$
In particular it vanishes if and only if $H^1(M, \brho)$ is non-trivial, where $\brho$ is the residual representation $\brho\colon \pi_1(M) \xrightarrow{\rho} \Sldeux(\cO_v) \xrightarrow{\text{ mod } t} \Sldeux(k)$.
\end{theo}
We deduce immediately the following theorem:
\begin{theo}
The torsion function $\tor(M, \rho)$ is a regular function on $X$, that is an element of the function ring $k[X]$.
\end{theo}

For a complex $C^*$  of $\cO_v$-modules such that $C^* \otimes k(X)$ is an exact complex, we will say that $C^*$ is rationally exact.
The main tool of the proof of Theorem \ref{theo:Finite} is the following theorem which is proved in \cite[Theorem 30]{GKZ94} or \cite{Tur01}.

\begin{theo}
\label{theo:Russes}
Let $\chi \in X$ a character, and $v$ a valuation on $k(X)$ associated to $\chi$. If $C^*$ is a rationally exact based complex of $\cO_v$-modules with basis $\lbrace c^i \rbrace$, then 
$$v(\tor(C^* \otimes k(X), \lbrace c^i \rbrace ))= \sum_k (-1)^{k} \len(H^k(C^*))$$
\end{theo}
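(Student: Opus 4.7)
The plan is to use the structure theorem over the discrete valuation ring $\cO_v$ to pick $\cO_v$-bases of $C^*$ adapted simultaneously to the cycles and the boundaries, and then read off the torsion via Cayley's formula (Section \ref{Cayley}).

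First I note that $\cO_v$ is a PID, so every submodule of the free $\cO_v$-module $C^i$ is free. In particular $Z^i := \ker d_i$ and $B^i := \im d_{i-1}$ are free, and rational exactness forces $Z^i \otimes k(X) = B^i \otimes k(X)$, so both have the same $\cO_v$-rank $r_i$ and $H^i = Z^i/B^i$ is a torsion module. Applying Smith normal form to the inclusion $B^i \hookrightarrow Z^i$ yields $\cO_v$-bases $\beta^i$ of $B^i$ and $\zeta^i$ of $Z^i$ whose transition matrix is $D^i = \mathrm{diag}(t^{n_{i,1}}, \ldots, t^{n_{i,r_i}})$; consequently $\len(H^i) = \sum_k n_{i,k} = v(\det D^i)$.

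Next, since $B^{i+1}$ is free, the short exact sequence $0 \to Z^i \to C^i \to B^{i+1} \to 0$ splits over $\cO_v$. I lift $\beta^{i+1}$ to a family $\kappa^i \subset C^i$ with $d_i(\kappa^i_k) = \beta^{i+1}_k$, and set $c'^i := \zeta^i \sqcup \kappa^i$; this is an $\cO_v$-basis of $C^i$ realizing the splitting $C^i = \ker d_i \oplus K^i$ required by Cayley's formula, with $K^i = \mathrm{span}(\kappa^i)$. By construction $d_i(\kappa^i_k) = \beta^{i+1}_k = t^{n_{i+1,k}} \zeta^{i+1}_k$, so the matrix of $d_i\vert_{K^i} : K^i \to \ker d_{i+1}$ in the bases $\kappa^i, \zeta^{i+1}$ is precisely $D^{i+1}$. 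Cayley's formula applied to the exact $k(X)$-complex then gives, up to sign,
$$\tor(C^* \otimes k(X), \{c'^i\}) = \prod_i (\det D^{i+1})^{(-1)^{i+1}} = t^{\sum_j (-1)^j \len(H^j)}.$$

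It remains to pass from $\{c'^i\}$ back to $\{c^i\}$. Both are $\cO_v$-bases of the same free $\cO_v$-module $C^i$, so the change-of-basis matrices lie in $\operatorname{GL}(\cO_v)$ and their determinants have valuation $0$. Since the torsion of a based complex transforms under a basis change by the alternating product of these determinants, taking the valuation of both sides yields $v(\tor(C^* \otimes k(X), \{c^i\})) = \sum_j (-1)^j \len(H^j)$. The main delicate step is producing the adapted basis $\{c'^i\}$ with all compatibilities at once: one must diagonalize every inclusion $B^i \hookrightarrow Z^i$ via Smith normal form and, simultaneously, lift $\beta^{i+1}$ across $d_i$ into $C^i$; both are possible precisely because every module appearing in the construction is free over the DVR $\cO_v$.
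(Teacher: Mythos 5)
Your proof is correct. Note that the paper does not prove this statement at all: it is quoted from \cite[Theorem 30]{GKZ94} and used as a black box, so there is no in-paper argument to compare against. Your argument is the standard one and it is complete: over the DVR $\cO_v$ the cycles $Z^i$ and boundaries $B^i$ are free of equal rank by rational exactness, Smith normal form for $B^i\hookrightarrow Z^i$ produces the diagonal matrices $D^i$ with $v(\det D^i)=\len(H^i)$, the splitting $0\to Z^i\to C^i\to B^{i+1}\to 0$ (valid because $B^{i+1}$ is free, hence projective) yields the adapted basis $c'^i=\zeta^i\sqcup\kappa^i$, and Cayley's formula then gives the alternating sum of lengths; the final reduction from $\{c'^i\}$ to $\{c^i\}$ is handled correctly since the transition matrices lie in $\operatorname{GL}(\cO_v)$ and contribute valuation zero. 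Two minor points you could make explicit: the torsion is only defined up to sign, so the identity should be read as an equality of valuations (which is what the statement asserts), and in degree $0$ rational exactness forces $Z^0=0$, so the $H^0$ term is vacuous and the index shift $j=i+1$ in your product causes no boundary trouble.
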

We will need the following lemma, see \cite[Lemma 3.9]{Porti97} or \cite[Lemma 2.16]{Ben16} for a proof.
\begin{lem} \label{lem central}
Assume that $M$ is a knot in a rational homology sphere. Let $\chi \in X$ be a reducible character in a component of irreducible type of the character variety $X(M)$. Then the character $\chi$ is non-central.
\end{lem}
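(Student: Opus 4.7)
The plan is to argue by contradiction, assuming $\chi \in X$ is central. First I would observe that the function $\eps(\g) := \chi(Y_\g)/2$ takes values in $\{\pm 1\}$; the trace relation $Y_\g Y_\de = Y_{\g\de} + Y_{\g\de^{-1}}$ evaluated at $\chi$, together with the constraint $\chi(Y_\g)^2 = 4$ for all $\g$, then forces $\eps : \pi_1(M) \to \{\pm 1\}$ to be a group homomorphism. I would then twist $\chi$ by $\eps$ via $\chi^\eps(Y_\g) := \eps(\g)\chi(Y_\g)$. Since $\eps$ lands in the center of $\Sldeux$, a direct check shows $\chi^\eps$ respects the trace identities and that twisting is an algebraic automorphism of $X(M)$ preserving the irreducible decomposition. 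Because $\chi^\eps = \chi_0$ is the trivial character, one reduces to the case $\chi = \chi_0$.

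Next, let $v$ denote the valuation on $k(X)$ corresponding to $\chi_0$. Since $\chi_0$ is a finite point, Proposition \ref{split} gives a $\Gldeux$-conjugate of the tautological representation landing in $\Sldeux(\cO_v)$; the residual representation $\brho$, being semi-simple with trivial character, satisfies $\brho \equiv \Id$. Writing $\rho(\g) = \Id + t X_\g$ with $X_\g \in M_2(\cO_v)$, I set $A(\g) := X_\g \bmod t \in \mathfrak{sl}_2(k)$ (the trace of $A(\g)$ vanishes because $\det\rho(\g)=1$). Reducing $\rho(\g\de) = \rho(\g)\rho(\de)$ modulo $t$ shows $A$ is a group homomorphism $\pi_1(M) \to \mathfrak{sl}_2(k)$, hence factors through $H_1(M;\mZ)$.

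Third, since $X$ is of irreducible type, $\rho$ is absolutely irreducible, so there exist $\al, \be \in \pi_1(M)$ with $\Tr \rho([\al,\be]) \neq 2$. A direct expansion using $\rho(\g) = \Id + tX_\g$ and the formula for $\rho(\g^{-1})$ to order $t^2$ gives $\rho([\al,\be]) = \Id + t^2 [A(\al), A(\be)] + O(t^3)$. Since the trace of a commutator in $\mathfrak{sl}_2$ vanishes, this yields $v_\chi(\Delta_{\al,\be}) \geq 3$.

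The main obstacle is then converting this second-order vanishing into a genuine contradiction. My planned strategy is: first, exploit the peripheral constraint that $\pi_1(\partial M) = \mZ^2$ forces $\rho$ restricted to the boundary to have commuting image, hence $A|_{\pi_1(\partial M)}$ lies in some Borel subalgebra $\mathfrak{b} \subset \mathfrak{sl}_2(k)$; second, combine this with Poincar\'e--Lefschetz duality for $M$ and the one-dimensionality of $X$ to propagate the containment to $A(H_1(M;\mZ)) \subset \mathfrak{b}$, placing the tangent direction of $X$ at $\chi$ inside the reducible tangent cone; third, use an inductive obstruction-theory argument (lifting the reducibility order by order, with obstruction classes in $H^2(\pi_1(M), \mathfrak{sl}_2)$ vanishing thanks to the topological structure of a compact 3-manifold with torus boundary) to force the whole component $X$ into the closed reducible locus, contradicting the irreducible-type assumption. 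This final obstruction argument is carried out in \cite[Lemma 3.9]{Porti97}.
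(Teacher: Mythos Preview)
Your proposal is not a complete proof, and the approach diverges sharply from the paper's. The paper's argument is three lines: if $\chi$ is central, then every representation with character $\chi$ has image in (a conjugate of) $\pm U^+$, so its $\Sldeux$-orbit in the representation variety $R(\G)$ has dimension at most $2$; on the other hand, irreducible representations have $3$-dimensional orbits. Upper semi-continuity of the fibre dimension of $R(\G)\to X(\G)$ along the component $X$ then gives the contradiction. No infinitesimal analysis, no obstruction theory.

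There are two genuine gaps in your write-up. First, the final citation is circular: \cite[Lemma 3.9]{Porti97} \emph{is} the statement you are trying to prove (the paper itself cites it as the source of this lemma). So invoking it for ``the final obstruction argument'' assumes the conclusion. Everything you actually establish before that point is the estimate $v_\chi(\Delta_{\al,\be})\ge 3$, which only says a regular function on the curve $X$ has a high-order zero at one point; that is not a contradiction, and your sketched route from there (peripheral constraint, duality, order-by-order lifting) is not carried out. Second, the assertion that the residual representation $\brho$ can be taken semi-simple, hence equal to $\Id$, is unjustified: a convergent conjugate of the tautological representation has a well-defined semi-simplification of its reduction, but the reduction itself can perfectly well be non-trivial unipotent. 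In tree language, $\brho_s=\Id$ for a fixed vertex $s$ would require \emph{every} neighbour of $s$ to lie in the fixed subtree, which need not happen. Without $\brho=\Id$ your expansion $\rho(\g)=\Id+tX_\g$ collapses.

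If you want to salvage the infinitesimal viewpoint, the clean statement to aim for is that on an irreducible-type component the locus $\{\chi:\chi(Y_\g)^2=4\ \forall\g\}$ is a proper closed subscheme, and this is most efficiently seen via the global orbit-dimension argument above rather than by Taylor-expanding at a single point.
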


\begin{proof}[Proof of Theorem \ref{theo:Finite}]
Since the complex $C^*(M, \rho)$ is acyclic, we can apply Theorem \ref{theo:Russes}. Recall from Lemma \ref{lem:TorsionModules} that the $H^i(M, \rho)_v$ are torsion $\cO_v$-modules. As a submodule of a free module, $H^0(M, \rho)_v$ is trivial. Then Lemma \ref{lem central} implies that no character $\chi \in X$ is central, in particular $H^0(M, \brho)$ is trivial. But the universal coefficients theorem provides the isomorphisms $H^0(M, \brho) \simeq H_0(M, \brho)^*$, and $H_0(M, \brho) \simeq H_0(M, \rho)_v \otimes k$, thus we have proved that $H_0(M, \rho)_v$ is trivial. Again by the universal coefficients theorem we have $ \Ext(H^1(M, \rho)_v, \cO_v) \simeq H_0(M, \rho)_v = \lbrace 0 \rbrace$, and we conclude that $H^1(M, \rho)_v \simeq \Ext(H^1(M, \rho)_v, \cO_v) = \lbrace 0 \rbrace$ because it is a torsion module. In conclusion we have proved the first part of the theorem
$$v(\tor(M, \rho)) = \len(H^2(M, \rho)_v)$$
Now $H^2(M, \rho)_v$ being trivial is equivalent to  $H^2(M, \brho)$ being trivial which is the same that $H^1(M, \brho)$ being trivial, and the theorem is proved.
\end{proof}

\subsection{Some computations and examples} \label{section examples}
We compute the torsion function on a series of examples of twist knots, and determine its zeros on the character variety.
\begin{figure}[h]
\begin{center}
\def\svgwidth{0.2\columnwidth}
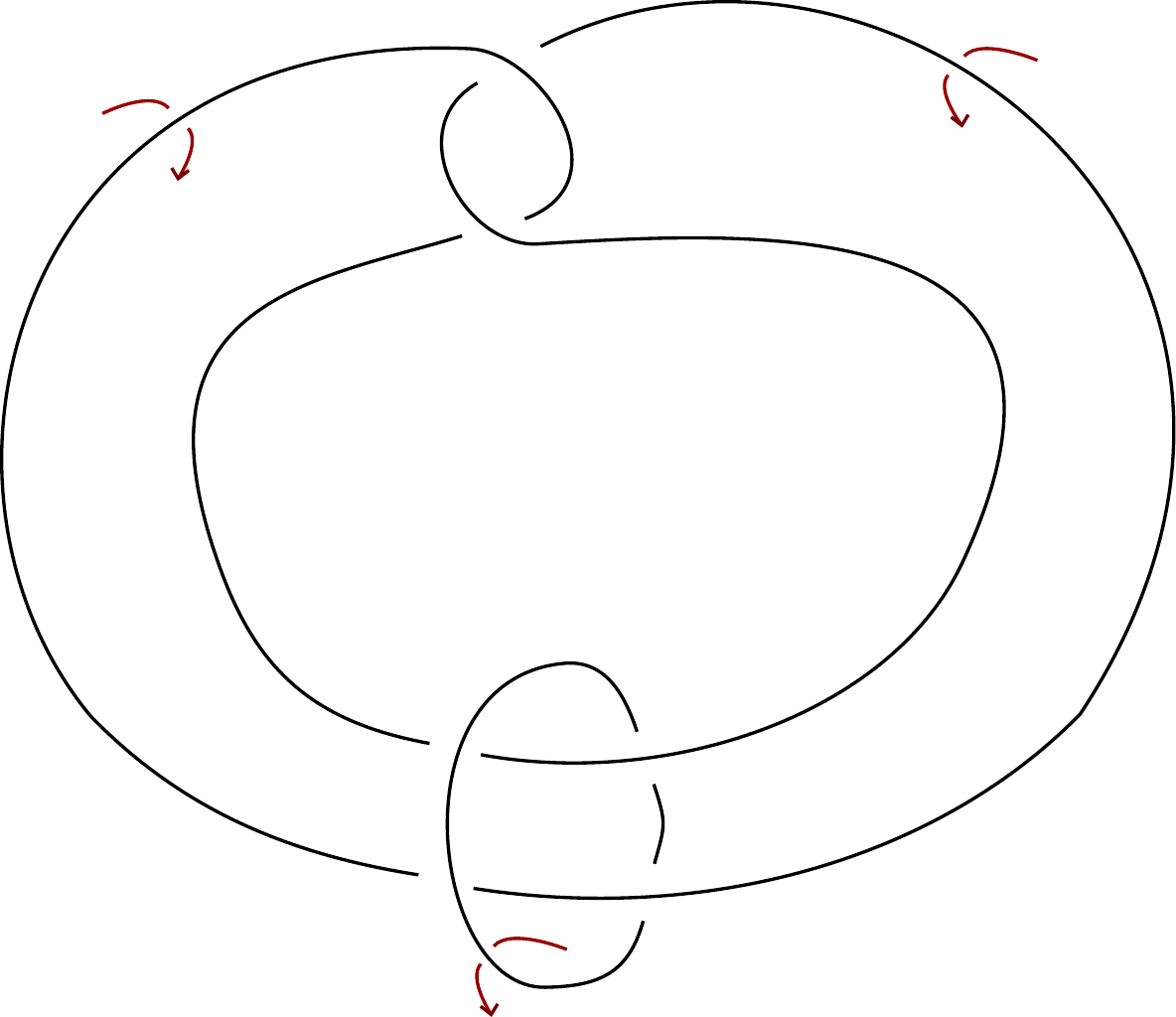
\caption{\label{whitehead}A diagram of the Whitehead link. $a,b$ and $\lambda$ are depicted generators of the fundamental group, and $\mu$ is a counter-clockwise oriented  longitude of the circle component.}
\end{center}
\end{figure}
A presentation of the fundamental group of the Whitehead link can be computed to be $$\pi_1(L_{5^2_1})= \langle a, b, \lambda \vert \  b=\lambda a \lambda^{-1}, [\lambda^{-1},a^{-1}][\lambda^{-1},a][\lambda,a][\lambda,a^{-1}] = 1 \rangle$$

The $J(2,2n)$-twists knots, $n \in \mZ$, are obtained as $\frac{1}{n}$ Dehn filling along the circle component. The additional relation is thus $\mu^n=\lambda$, where $\mu = ba^{-1}b^{-1}a$. Notice that the second relation in the presentation above is $[\la,\mu]=1$, hence is redondant whence $\mu^n=\lambda$.
Figure \ref{twist} shows positive and negative twist knots, for $n=\pm 1$.
\begin{figure}[]
\begin{center}
\def\svgwidth{0.4\columnwidth}
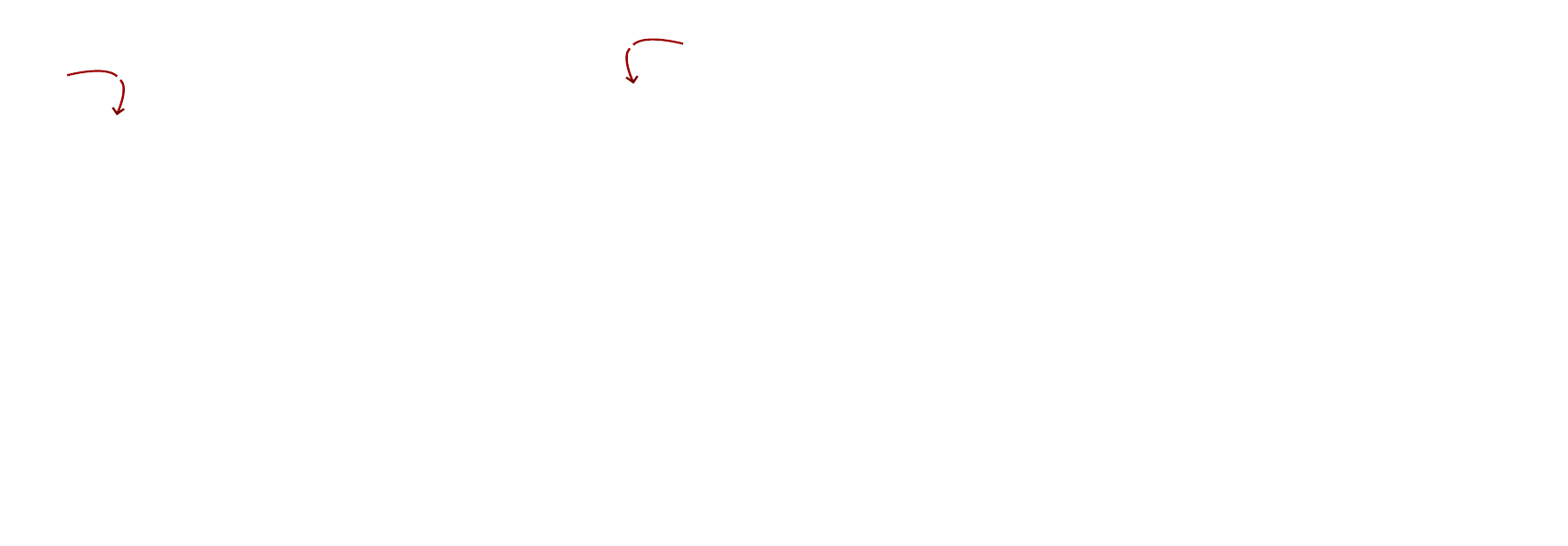
\caption{\label{twist}The diagram on the left is $J(2,2)$, the trefoil knot. The one on the right is $J(2,-2)$, the figure-eight knot.}
\end{center}
\end{figure}

We obtain the following presentation of twist knot group: $$\pi_1(J(2,2n)) = \langle a,b \vert (ba^{-1}b^{-1}a)^na=b(ba^{-1}b^{-1}a)^n \rangle=  \langle a,\lambda \vert \mu^n = \lambda \rangle$$ where the curve $\mu$ is the curve $ba^{-1}b^{-1}a=[\lambda,a][\lambda,a^{-1}]$.

We define  a tautological representation of the character variety $X(2,2n)$ of the twist knots $J(2,2n)$ by $$\rho(a)=\bm s & 1\\0& s^{-1} \ema, \rho(b) = \bm s & 0\\ y-s^2-s^{-2} & s^{-1} \ema$$ We will use the variable $x = s+s^{-1}$.
A direct computation shows (see \cite{Kit96}, for instance) that $\tor(M, \rho) = \frac{\det(\rho(\frac{\partial r}{\partial \lambda}))}{\det(1-\rho(a))}=\frac{\det(\rho(\frac{\partial r}{\partial \lambda}))}{(2-x)}$, where $\rho$ is extended linearly to the ring $\mZ[\pi_1]$.
\begin{itemize}
\item If $n >0$, we obtain $\tor(M, \rho) =\frac{ \det((1+\rho(\mu) +...+\rho(\mu)^{n-1})(1-\rho(b)+\rho(ba^{-1})-\rho(ba^{-1}b^{-1}))-1)}{2-x}$
\item If $n<0$, we obtain $\tor(M, \rho) = \frac{\det((\rho(\mu)^{-1}+...+\rho(\mu)^n) (1-\rho(b)+\rho(ba^{-1})-\rho(ba^{-1}b^{-1})))}{2-x}$
\end{itemize}
\subsubsection{The trefoil knot J(2,2)}
The character variety is the curve $X(2,2) =~\lbrace (x^2-y-2)(y-1) =0\rbrace$. The component of irreducible type $X$ is thus $\lbrace y-1 =0 \rbrace$. We compute the torsion function in $\mC[x,y]/(y-1)$, it is $\tor(M, \rho)=\frac{y-2x+3}{2-x} = 2$, the torsion is constant.
\subsubsection{The figure-eight knot J(2,-2)}
Let $X = \lbrace 2x^2+y^2-x^2y-y-1 =0 \rbrace$ the component of irreducible type of $X(2,-2)$. We have $\tor(M, \rho)=(4x^2-x^2y+y^2-y-6x+3)/(2-x) = 2x-2$ in $\mC[X]$, hence there is a zero at the point $\lbrace x=1,y=1 \rbrace$, with multiplicity 2.
\subsubsection{The knot $5_2$: J(2,4)}
Here $X = \lbrace -x^2(y-1)(y-2)+y^3-y^2-2y+1=0 \rbrace$, and $\tor(M, \rho)$ has two double zeros when $x=y$ are roots of $x^2-3x+1$.
\subsubsection{The knot $6_1$: J(2,-4)}
Here $X = \lbrace x^4(y-2)^2-x^2(y+1)(y-2)(2y-3)+(y^3-3y-1)(y-1)=0 \rbrace$, and $\tor(M, \rho)$ has three double zeros when $x=y$ are roots of $x^3-4x^2+3x+1$.

\begin{remk}
We observe that each time we have found a zero for the torsion, it had multiplicity 2 and $\lbrace \tor(M, \rho)=0\rbrace \subset X\cap\lbrace x=y \rbrace$. We have checked that this inclusion is strict.
\end{remk}

\section{The torsion at ideal points}
\label{sec:Ideal}
The aim of this section is to prove Theorem \ref{acyclic ideal}. The main idea to compute the order of the torsion $\tor(M, \rho)$ at an ideal point is to use an incompressible surface obtained at this ideal point by the Culler--Shalen theory. It will provide a Mayer--Vietoris splitting of the cohomological complex $C^*(M, \rho)$. By construction, despite the tautological representation $\rho$ is not convergent at an ideal point $v$ (see Proposition \ref{prop:Convergent}), this splitting will induce a convergent restriction of $\rho$ to the fundamental group of each connected component of the Culler--Shalen surface on one hand, and  convergent restrictions of $\rho$ to each connected component of the complement of this surface in $M$.

Depending whether the surface is separating in the manifold $M$ or not, the technicalities differ slightly : in Subsection \ref{split case} we assume that the surface is separating $M$, and in Subsection \ref{non-split case} we deal with the non-separating case.
\medbreak

We fix an ideal point $x \in \h{X}$.
Recall from Section \ref{tree} that such an ideal point provides an incompressible surface $\Si \in M$. Since $\partial \Sigma \subset \partial M$, each connected component induces the same element $[\partial \Sigma]$ in $\pi_1(M)$, well defined up to conjugacy. It is called the boundary slope of $\Sigma$.

In this section we will make the following assumptions on $\Si$, see Subsection \ref{prime}:
\begin{enumerate}[a)]
\item \label{hypo:parallel}
The surface $\Si$ is a union of homeomorphic parallel copies $\Si_1 \cup \ldots \cup \Si_n$, with $\chi(\Sigma_i) <0$.
\item \label{hypo:free}
Any connected component of the complement of any  $\Si_i$ in $M$ is a handlebody.
\item
\label{hypo:NonCentral}
There is an element $\gamma$ of $\pi_1(\Sigma)$ such that $\Tr \brho_\Sigma(\gamma) \neq 2$, so that $\brho_\Sigma$ is non-trivial.

Note that it does not depend on the restriction $\rho_{1,\Sigma}$ or $\rho_{2,\Sigma}$.
\item
\label{hypo:TrivialRoot}
The boundary of $\Sigma$ is not empty, and the trace of the image of the boundary slope $\partial \Sigma$ by the residual representation satisfies $\Tr \brho_\Sigma(\partial \Si)=2$.
\end{enumerate}

\begin{remk} \label{remk:Assumptiond}
When the surface $\Sigma$ is not separating in $M$ (the complement of any of its connected component is connected), then Assumption (\ref{hypo:TrivialRoot}) automatically holds. Indeed, since $M$ has rational homology of a circle, $H_2(M, \mathbb{Q})$ is trivial hence the surface $\Sigma$ needs to have a non-empty boundary in this case. In particular any component $\Sigma_i$ of $\Sigma$ provides a generator of the relative homology group $H_2(M, \partial M; \mathbb{Q})$. As the intersection pairing is non-degenerate, it implies that any connected component $\gamma \in \partial \Sigma$ is null-homologous in $M$. But $\Sigma$ is incompressible, hence $\gamma$ is a product of commutators in $\pi_1(\Sigma)$. Finally, we proved in Lemma \ref{lem:NonSplit} that the residual representation $\brho_\Sigma$ is reducible, hence any commutator has trace 2 by Lemma \ref{lem:Commutators}.
\end{remk}

\begin{remk} \label{eight knot}
An example of a separating surface satisfying Hypotheses  (\ref{hypo:parallel}), (\ref{hypo:free}), (\ref{hypo:NonCentral}), and (\ref{hypo:TrivialRoot}) is given by the figure-eight knot complement.
A classical diagram of the figure-eight knot is drawn in Figure \ref{huit}, with a non-orientable surface $\check{\Si}$ obtained by a checkerboard coloring. The boundary of a neighborhood of $\check{\Si}$ is an orientable surface $\Si$, which turns out to be incompressible. It is detected by the point $\lbrace x = \infty, y=2 \rbrace$ of the component of irreducible type of the character variety of the figure-eight knot. It easy to see on the picture that its complement is the union of two genus 2 handlebodies, and a computation shows that the trace of the boundary slope  $\partial \Si = uv^{-1}u^{-1}vuv^{-1}u^{-2}v^{-1}uvu^{-1}v^{-1}u^{-1}$  is $\Tr \brho_\Sigma(\partial \Sigma) = 2$.
\end{remk}

\medbreak

Now we can give a complete statement of Theorem \ref{acyclic ideal} from the Introduction. Its proof will occupy the rest of this section.

\begin{theo}
\label{theo:Separating}
Let $x \in \h{X}$ be an ideal point in the smooth projective model of $X$, and assume that an associated incompressible surface $\Sigma$ satisfies hypothesis (\ref{hypo:parallel}), (\ref{hypo:free}), (\ref{hypo:NonCentral}), and (\ref{hypo:TrivialRoot}).
Then the torsion function $\tor(M, \rho)$ has a pole at $x$. In particular the torsion function is non-constant.
\end{theo}

\subsection{The separating case} \label{split case}
In this section we give a proof of Theorem \ref{theo:Separating} in the separating case.

\medbreak


\begin{figure}[]
\begin{center}
\def\svgwidth{0.3\columnwidth}
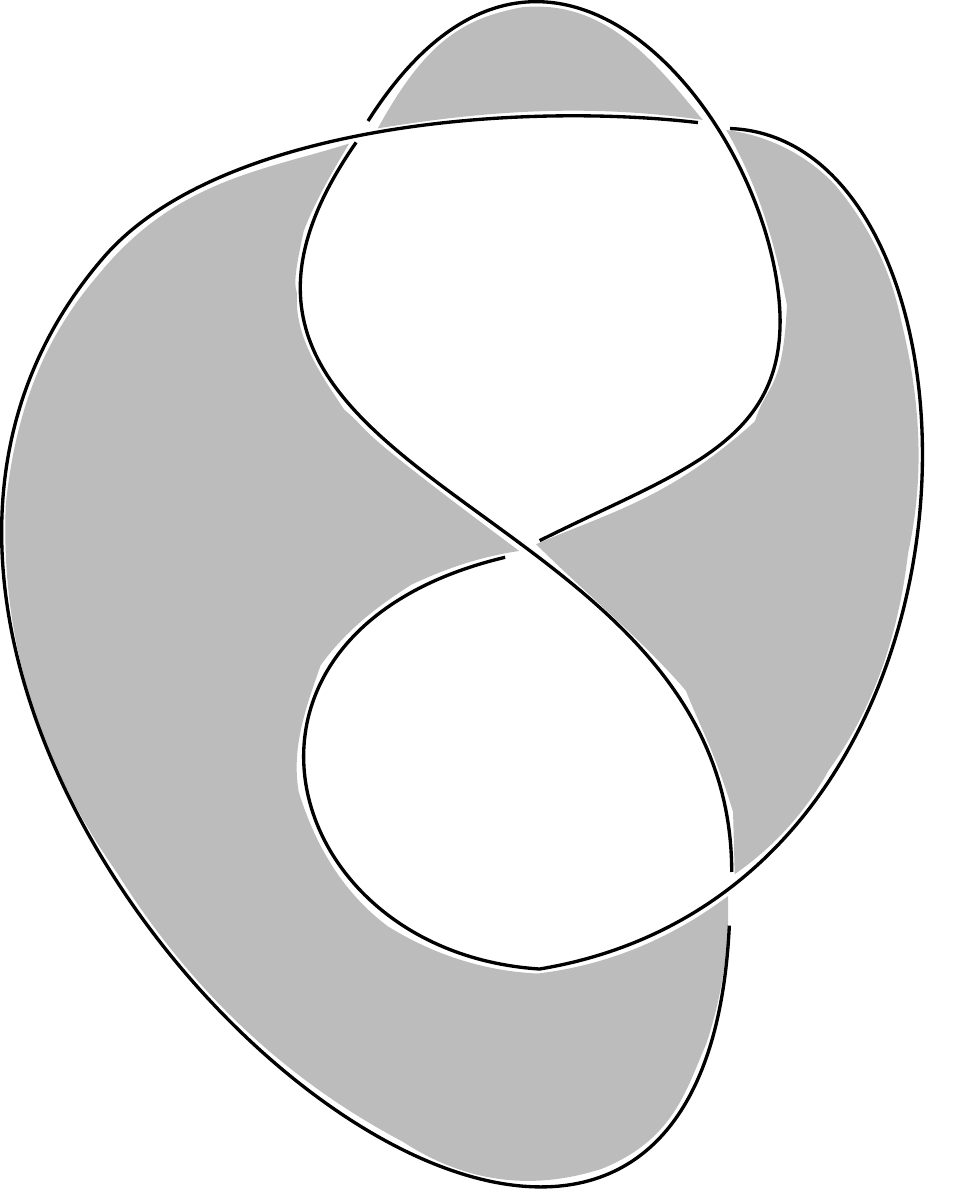
\caption{\label{huit}The figure-eight knot, with a non-orientable checkerboard surface $\check{\Si}$, and generating loops $u,v$ of its fundamental group}
\end{center}
\end{figure}

In this section we consider the case when the complement of any $\Sigma_i$ in $M$ is not connected. Thus we consider the splitting $M=M_1 \cup_{\Sigma_1} M_2$. It induces the following exact sequence of complexes of $k(X)$-vector spaces 
\begin{equation}
\label{equa:exact}
0 \to C^*(M, \rho) \to C^*(M_1, \rho_1)\oplus C^*(M_2, \rho_2) \to C^*(\Sigma, \rho_{1,\Sigma}) \to 0
\end{equation}

\begin{nota}
Since we have picked a base point $p \in \Si_1$ in Section \ref{prime}, we will abuse of the notation $\pi_1(\Si)$ to designate $\pi_1(\Si_1)$.
In the same way, we denote by $C^*(\Si, \rho_\Si)$ the twisted cohomological complex of $\Si_1$. Note that we make the choice $\rho_{1,\Sigma}$ here and in the sequel. Up to now, we wont mention it explicitly since it does not affect the cohomology of the complex $C^*(\Sigma, \rho_\Sigma)$, because $\rho_{1,\Sigma}$ and $\rho_{2,\Sigma}$ are conjugated. 
\end{nota}

\begin{lem} \label{333}
The long exact sequence in cohomology induced by (\ref{equa:exact}) reduces to the following isomorphism of $k(X)$-vector spaces: 
\begin{equation}
\label{eq:Isom}
H^1(M_1,\rho_1) \oplus H^1(M_2, \rho_2) \xrightarrow{\sim} H^1(\Sigma, \rho_\Sigma)
\end{equation}
\end{lem}
\begin{proof}
Recall that  $C^*(M, \rho)$ is acyclic and $H^j(\Sigma, \rho_\Sigma)=\lbrace 0 \rbrace$ for any $j \ge 2, i=1,2$ because $\Sigma$ have the same homotopy type that a one-dimensional CW complex. By Remark \ref{remk:SigmaFaithfull} 
there exists a $\g$ in  $\pi_1(\Sigma)$ such that $\Tr \rho_\Sigma(\g) \neq 2$. Thus $H^0(\Sigma, \rho_\Sigma) =~\lbrace 0 \rbrace$. The lemma follows now from the Mayer--Vietoris Theorem applied to (\ref{equa:exact}).
\end{proof}
To compute the torsion $\tor(M, \rho)$ in terms of the decomposition (\ref{equa:exact}), Lemma \ref{333} and the multiplicativity formula delayed in Proposition \ref{prop:Multiplicativity} indicate that we will have to focus on the isomorphism (\ref{eq:Isom}). Given $v$ the $k$-valuation on $k(X)$ corresponding to the ideal point $x$ in $\hat{X}$, once for all we fix a tautological representation as in Lemma \ref{converge}. In particular $\rho'_2$ is convergent although $\rho_2$ is not. Hence we replace (\ref{eq:Isom}) by the corresponding isomorphism involving $\rho'_2$:
\begin{lem}
\label{lem:Prime}
The isomorphism (\ref{eq:Isom}) can be replaced by 
\begin{align*}
\theta\colon H^1(M_1,\rho_1) \oplus &H^1(M_2, \rho'_2) \xrightarrow{\sim} H^1(\Sigma, \rho_\Sigma)\\
(Z_1,&Z_2)\mapsto(Z_1-U_n Z_2)_{\vert_\Sigma}
\end{align*}
\end{lem}
\begin{proof}
Recall the notation $U_n = \bsm t^n&0\\0&1\esm$, where $t \in k(X)$ is a uniformizing parameter for the valuation $v$.
By Lemma \ref{converge}, the convergent representation $\rho'_2$ is given by $\rho'_2(\gamma) = U_n^{-1}\rho_2(\gamma) U_n$ for any $\gamma$ in $\pi_1(M_2)$. Consequently there is an isomorphism 
\begin{align*}
H^1(M_2, \rho'_2) &\xrightarrow{\sim} H^1(M_2, \rho_2)\\
Z&\mapsto U_n Z
\end{align*}
and the lemma follows now from classical the Mayer--Vietoris sequence.
\end{proof}
We pick bases  $c_\Si, c_1,c_2,h_\Si, h_1, h_2$ of the complexes of $k(X)$-vector spaces $C^*(M_1, \rho_1)$, $C^*(M_2, \rho'_2)$ and $C^*(\Si, \rho_\Si)$ and of their homology groups $H^1(M_1, \rho_1)$, $H^1(M_2, \rho'_2)$ and $H^1(\Si, \rho_\Si)$. Note that each basis is a family of basis vectors for each $k(X)$-vector space. We also pick a basis for the acyclic complex $C^*(M, \rho)$. We will make use from the well-known "multiplicativity formula" due to Milnor \cite{Mil66}: 
\begin{prop}
\label{prop:Multiplicativity}
The torsion of the complex $C^*(M, \rho)$ can be expressed as 
$$\tor(M, \rho) = \frac{\tor(M_1, h_1) \tor(M_2, h_2)}{\tor(\Sigma, h_\Sigma)} \tor(\mathcal{H},h_1, h_2, h_\Sigma) \in k(X)^*$$
where $\mathcal{H}$ is the Mayer--Vietoris sequence induced by (\ref{equa:exact}).
 \end{prop}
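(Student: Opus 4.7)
The plan is to deduce this formula from the multiplicativity of the Reidemeister torsion under short exact sequences of based complexes, applied to the Mayer--Vietoris sequence attached to the splitting $M = M_1 \cup_{V(\Si)} M_2$.

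First, I would fix a triangulation $K$ of $M$ that restricts to triangulations of $M_1$, $M_2$ and $V(\Si)$, and hence induces a triangulation of $\Si_1$. Lifting each cell equivariantly to $\ti{M}$ and picking an orbit representative yields distinguished bases $c, c_1, c_2, c_\Si$ of the twisted cellular cochain complexes $C^*(M,\rho)$, $C^*(M_i,\rho_i)$ and $C^*(\Si,\rho_\Si)$; with respect to these bases the Mayer--Vietoris sequence
$$0 \to C^*(M,\rho) \to C^*(M_1,\rho_1)\oplus C^*(M_2,\rho_2) \to C^*(\Si,\rho_\Si) \to 0$$
is a short exact sequence of based complexes of $k(X)$-vector spaces (the middle basis is, up to sign, the concatenation of a lift of the right basis with the image of the left basis).

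Next, I would invoke the general multiplicativity of torsion for based short exact sequences of complexes (\cite{Mil66}): if $0 \to C' \to C \to C'' \to 0$ is such a sequence and one chooses bases on the three cohomologies, then, up to the sign ambiguity of $\tor$,
$$\tor(C) \;=\; \tor(C')\cdot \tor(C'')\cdot \tor(\mathcal{L}),$$
where $\mathcal{L}$ is the long exact sequence in cohomology, viewed as an acyclic complex based by the chosen cohomology bases. Since the torsion of a direct sum of based complexes is the product of their torsions, applying this to our Mayer--Vietoris sequence produces a multiplicative relation between $\tor(M,\rho)$, $\tor(M_1,h_1)\tor(M_2,h_2)$, $\tor(\Si,h_\Si)$ and the torsion of the cohomological Mayer--Vietoris long exact sequence.

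Third, I would collapse that long sequence using the vanishing established just before. Acyclicity of $C^*(M,\rho)$ kills $H^*(M,\rho)$ entirely, and Lemma \ref{333} together with the argument of Lemma \ref{lem free} shows that only the degree $1$ cohomologies of $M_i$ and of $\Si$ are nonzero. Hence the full Mayer--Vietoris long exact sequence reduces to the two-term acyclic complex
$$0 \to H^1(M_1,\rho_1)\oplus H^1(M_2,\rho_2) \xrightarrow{\,d\,} H^1(\Si,\rho_\Si) \to 0,$$
which is precisely $\mathcal{H}$; its torsion computed with respect to $h_1,h_2,h_\Si$ is by definition $\tor(\mathcal{H},h_1,h_2,h_\Si)$. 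Solving the multiplicative identity for $\tor(M,\rho)$ then yields the stated formula, the denominator position of $\tor(\Si,h_\Si)$ being forced by the alternating signs in the definition of $\det(C^*)$ adopted in Section \ref{Reidem} (the rightmost complex $C''$ contributes with the opposite sign from the leftmost $C'$). The main technical obstacle is the bookkeeping of signs and of basis compatibility in the multiplicativity formula: one needs to verify that the conventions used for $\det(C^*)$ in Section \ref{Reidem} agree with the exponents appearing in Milnor's formula, and that any residual permutation sign is absorbed in the $\pm 1$ indeterminacy of the torsion.
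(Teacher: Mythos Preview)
Your proposal is correct and is precisely the standard derivation of Milnor's multiplicativity formula for torsion under a short exact sequence of based complexes, specialized to the Mayer--Vietoris sequence of the splitting $M=M_1\cup_{V(\Si)}M_2$. The paper does not give its own proof of this proposition: it simply attributes the formula to Milnor \cite{Mil66} and states it, so your argument supplies exactly the content the paper cites.
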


Taking benefit of the convergence of the restricted tautological representation $\rho$ to $\pi_1(\Sigma)$ and to $\pi_1(M_i)$ (see Lemma \ref{converge}),  we consider the cohomological complexes of $\cO_v$-modules $C^*(\Sigma, \rho_\Sigma)_v$, $C^*(M_1, \rho_1)_v$ and $C^*(M_2, \rho'_2)_v$.
\begin{lem} \label{lem free}
The $\cO_v$-modules $H^1(M_1, \rho_1)_v, H^1(M_2, \rho'_2)_v, H^1(\Sigma, \rho_\Sigma)_v$ are free modules of rank $-\chi(\Si_1)$, $ -\chi(\Si_1)$ and $-2\chi(\Si_1)$ respectively. The rest of the cohomology is trivial.
\end{lem}
\begin{proof}
Let $\Theta$ denotes either $\Sigma$, $M_1$ or $M_2$.
The same argument than in the proof of Lemma \ref{333} implies that $H^i(\Theta, \rho_\Theta)_v$ is trivial for $i\ge2$.
 Since $H^0(\Theta, \rho_\Theta)_v$ is free by definition, and since its tensor product by $k(X)$ is trivial by Lemma \ref{333}, we conclude that $H^0(\Theta, \rho_\Theta)_v = \lbrace 0 \rbrace$. Hence we consider now the cohomology in degree~1.

By hypothesis (\ref{hypo:NonCentral}) and $\pi_1$-injectivity, there is a $\gamma$ in $\pi_1(\Theta)$ with $\Tr \brho_\Theta(\gamma) \neq 2$, hence the residual $k$-vector space $H^0(\Theta, \brho_\Theta)$ is trivial. The universal coefficients theorem implies that $H_0(\Theta, \brho_\Theta)$ also vanishes. Again by the universal coefficients theorem, the latter is isomorphic to $H_0(\Theta, \rho_\Theta)_v \otimes_{\cO_v} \cO_v/(t)$, so we have proved that the $\cO_v$-module $H_0(\Theta, \rho_\Theta)_v$ is torsion-free. Applying the universal coefficients theorem a third time, the (trivial) torsion part of the latter arises as the torsion part of $H_1(\Theta, \rho_\Theta)_v$, hence we obtain that $H^1(\Theta, \rho_\Theta)_v$ is a free $\cO_v$-module, as claimed. The computation of the rank follows from an Euler characteristic argument.

\end{proof}

\begin{prop}
The terms $\tor(M_1, h_1), \tor(M_2,h_2)$ and $\tor(\Si,h_\Sigma)$ lie in $\cO_v^*$
\end{prop}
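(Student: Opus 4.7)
The plan is to exhibit each of the three factors as the torsion of a complex of free $\cO_v$-modules whose cohomology is either zero or free over $\cO_v$, so that the combinatorial formula defining the torsion reduces to an alternating product of determinants of $\cO_v$-invertible base-change matrices, hence a unit.

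First I would treat $M_1$ and $\Si$. By Lemma \ref{converge}, $\rho_1$ and its restriction $\rho_\Si$ are convergent, so the cochain complexes $C^*(M_1, \rho_1)_v$ and $C^*(\Si, \rho_\Si)_v$ are complexes of free $\cO_v$-modules with natural cellular bases. Since $M_1$ is a handlebody and $\Si$ a surface with boundary, each has the homotopy type of a one-dimensional CW complex, so the degree-two parts vanish integrally. The degree-zero cohomologies vanish as well: they are zero over $k(X)$ by the irreducibility argument used in the proof of Lemma \ref{333}, and they are torsion-free $\cO_v$-modules as kernels of maps between free $\cO_v$-modules. Lemma \ref{lem free} then allows me to pick $h_1$ and $h_\Si$ to be $\cO_v$-bases of $H^1(M_1, \rho_1)_v$ and $H^1(\Si, \rho_\Si)_v$.

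With these choices, the two short exact sequences $0 \to Z^i \to C^i \to B^{i+1} \to 0$ and $0 \to B^i \to Z^i \to H^i \to 0$ underlying the first definition of the torsion are sequences of free $\cO_v$-modules: the submodules $Z^i, B^i$ of the free modules $C^i$ over the PID $\cO_v$ are free, and $H^i$ is zero or free by the above. Both sequences therefore split over $\cO_v$, one can pick integral bases $b^i$ of $B^i$, and the base-change matrix from $b^i \sqcup \bar h^i \sqcup \bar b^{i+1}$ to the cellular basis $c^i$ is $\cO_v$-invertible, with determinant in $\cO_v^*$. The alternating product then shows $\tor(M_1, h_1)$ and $\tor(\Si, h_\Si)$ lie in $\cO_v^*$. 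For $M_2$, the representation $\rho_2$ is not convergent, but its conjugate $\rho_2' = U_n^{-1} \rho_2 U_n$ is, again by Lemma \ref{converge}. I would invoke the conjugation invariance of the Reidemeister torsion: the $\pi_1(M_2)$-equivariant isomorphism $(k(X)^2, \rho_2') \to (k(X)^2, \rho_2)$ given by $v \mapsto U_n v$ induces a chain isomorphism $C^*(M_2, \rho_2') \xrightarrow{\sim} C^*(M_2, \rho_2)$ that transports an $\cO_v$-basis $h_2'$ of the free module $H^1(M_2, \rho_2')_v$ to a basis $h_2$ of $H^1(M_2, \rho_2)$, with $\tor(M_2, h_2) = \tor(M_2, \rho_2', h_2')$; the right-hand side lies in $\cO_v^*$ by the preceding argument applied to the convergent complex $C^*(M_2, \rho_2')_v$.

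The main obstacle is the consistency of this conjugation step for $M_2$, since $U_n$ has non-unit determinant $t^n$ and the direct cellular basis of $C^*(M_2, \rho_2)$ is therefore not integral for the $\cO_v$-lattice preserved by $\rho_2'$. The point to verify is that the torsion, viewed as a function of the pair $(M_2, \rho_2)$ together with a basis of $H^1$, is insensitive to simultaneous conjugation of cochains and of the homology basis, so that the $t^n$ scale factor arising from $U_n$ on chains is cancelled by the corresponding rescaling on $H^1$ and disappears from the invariant. Once $h_2$ is selected in this compatible way and used throughout the Milnor formula of the preceding proposition, the proposition follows.
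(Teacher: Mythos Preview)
Your argument is correct and follows the same route as the paper: pass to the convergent representations $\rho_1,\rho_2',\rho_\Si$, use Lemma \ref{lem free} to obtain free $\cO_v$-cohomology, choose integral bases $h_1,h_2,h_\Si$ accordingly, and observe that the torsion is then an alternating product of determinants of $\cO_v$-invertible base-change matrices. The paper handles $M_2$ in exactly the way you propose, by transporting an $\cO_v$-basis of $H^1(M_2,\rho_2')_v$ through the isomorphism $H^1(M_2,\rho_2')\to H^1(M_2,\rho_2)$ induced by $U_n$; your explicit verification that simultaneous conjugation of the cell basis and the homology basis leaves the torsion unchanged (so the non-unit $\det U_n$ cancels) is a point the paper leaves implicit.
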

\begin{proof}
Those factors are the torsions of based complexes of $k(X)$-vector spaces with based homologies, hence they lie in $k(X)^*$ by definition. Since the representations $\rho_1$, $\rho_2'$ and $\rho_\Si$ are convergent, one can define the complexes of $\cO_v$-modules $C^*(\Si, \rho_\Si)_v$, $C^*(M_1, \rho_1)_v$ and $C^*(M_2, \rho'_2)_v$ with their homology groups. Moreover, we might have chosen the bases $c_\Si, c_1,c_2,h_\Si, h_1, h_2$ of the paragraph above so that they generate those terms as $\cO_v$-modules, because $C^*(\Si, \rho_\Si)_v, \ldots , H^*(M_i, \rho_i)_v$ are free $\cO_v$-modules  (Lemma \ref{lem free}), and those choices would not affect the computation of the torsion of $M$ by Proposition \ref{prop:Multiplicativity}.

To be precise, assume that we have chosen a basis $h_2^1 = \lbrace h_2^{1,1}, \ldots, h_2^{1,\dim H^1(M_2, \rho'_2)}\rbrace$ of the free $\cO_v$-module $H^1(M_2, \rho'_2)_v$ that spans $H^1(M_2, \rho'_2)$ as a $k(X)$-vector space, and that it is mapped on a basis through the isomorphism of $k(X)$-vector spaces $H^1(M_2, \rho'_2) \to H^1(M_2, \rho_2)$.  Finally, the map $H^1(M_1, \rho_1)_v \to H^1(\Si, \rho_\Si)_v$ identifies the basis $h_1^1$ to a sub-basis of $h_{\Si}^1$ as an $\cO_v$-module, which can be completed in a basis of $H^1(\Si, \rho_\Si)_v$. Now we prove that the torsions of this complexes lie in $\cO_v^*$. We give the argument for $M_1$, but it can be shown in general in the same way.
The complex is $C^0(M_1, \rho_1)_v \xrightarrow{A} C^1(M_1, \rho_1)_v$. Since $H^0(M_1, \rho_1)_v$ is trivial, the matrix $A$ is the matrix of an injective $\cO_v$-linear morphism. Moreover, $H^1(M, \rho_1)_v$ is free, hence its determinant is an invertible $\det A \in \cO_v^*$ as claimed.
\end{proof}
\begin{remk}
As a consequence of this proposition, it is enough to compute the term $\tor(\mathcal{H},h^1_1, h^1_2, h^1_\Sigma)$. By Proposition \ref{prop:Cayley} and Lemma \ref{lem:Prime} it is the inverse of the determinant of the following map:
\begin{align*}
\theta\colon H^1(M_1,\rho_1) \oplus &H^1(M_2, \rho'_2) \xrightarrow{\sim} H^1(\Sigma, \rho_\Sigma)\\
(Z_1,&Z_2)\mapsto(Z_1-U_n Z_2)_{\vert_\Sigma}
\end{align*}
We compute now $\det(\theta)$.
\end{remk}
For this purpose we observe that the relation $\rho_1(\g) = U_n \rho'_2(\g) U_n^{-1}$ holds for any $\g$ in  $\pi_1(\Sigma)$. It implies that the corresponding residual representations have the following form: for any $\gamma$ in $\pi_1(\Sigma)$:
 \begin{equation}
 \label{eq:ResidualReducible}
 \brho_1(\g) = \bm \la(\g) & 0\\ *  & \la^{-1}(\g) \ema, \brho_2(\g) = \bm \la(\g) & *\\ 0 & \la^{-1}(\g) \ema.
 \end{equation}
\begin{nota}
Recall that we denote by $C^*(\Sigma, \lambda)$ the cohomological complex of $\Sigma$ with coefficient in $k$ with the action of $\pi_1(\Sigma)$ induced by multiplication by $\lambda$.
\end{nota}
%
%
%
\begin{lem} 
\label{lem:LongSequence}
Denote $Z_i$ in $C^1(\Sigma, \brho_i)$ by $Z_i(\g) = \bsm x_i(\g) \\ y_i(\g) \esm$. There are exact sequences of $\mZ[\pi_1(\Sigma)]$-modules:
\begin{align*}
0 \to C^1(\Sigma, \lambda^{-1}) &\xrightarrow{i_1} C^1(\Sigma, \brho_{1,\Sigma})\xrightarrow{p_1} C^1(\Sigma, \lambda)\to 0\\
y_1 &\mapsto \bsm0\\y_1 \esm \ \ , \ \bsm x_1\\y_1 \esm \mapsto x_1\\
\end{align*}
and
\begin{align*}
0 \to C^1(\Sigma, \lambda) &\xrightarrow{i_2} C^1(\Sigma, \brho_{2,\Sigma})\xrightarrow{p_2} C^1(\Sigma, \lambda^{-1})\to 0\\
x_2 &\mapsto \bsm x_2\\0 \esm \ \ , \ \bsm x_2\\ y_2 \esm \mapsto y_2\\
\end{align*}
Moreover, it induces the following exact sequences:
$$0 \to H^1(\Sigma, \la^{-1}) \xrightarrow{i_1} H^1(\Sigma, \brho_{1,\Si}) \xrightarrow{p_1} H^1(\Sigma, \la) \to 0$$
$$0 \to H^1(\Sigma, \la) \xrightarrow{i_2} H^1(\Sigma, \brho_{2,\Si}) \xrightarrow{p_2} H^1(\Sigma, \la^{-1}) \to 0$$
\end{lem}
\begin{proof}
The fact that the sequences of complexes are exact is obvious, that the maps are morphisms of $\mZ[\pi_1(\Sigma)]$-modules follows directly from the explicit description (\ref{eq:ResidualReducible}) of the action of $\brho_{i,\Sigma}$ on $k^2$, for $i=1,2$.

The exactness of the sequences in cohomology follows from the long exact sequence in cohomology and because hypothesis (\ref{hypo:NonCentral}) implies the nullity of the vector spaces $H^i(\Sigma, \lambda^{\pm1})$ for $i \neq 1$.
\end{proof}

We want to prove that the torsion has a pole at the ideal point $x$ or equivalently,  that the determinant of the map 
\begin{align*}
\theta\colon H^1(M_1,\rho_1) &\oplus H^1(M_2, \rho'_2) \xrightarrow{\sim} H^1(\Sigma, \rho_\Sigma)\\
(Z_1&,Z_2)\mapsto(Z_1-U_n Z_2)_{\vert_\Sigma}
\end{align*}
has positive valuation.
See $\theta_v$ as a morphism of $\cO_v$-modules $$\theta\colon H^1(M_1,\rho_1)_v \oplus H^1(M_2, \rho'_2) \to H^1(\Sigma, \rho_\Sigma)_v$$ and consider the $k$-linear residual map $\bar{\theta}\colon H^1(M_1, \brho_1) \oplus H^1(M_2, \brho_2) \to H^1(\Si, \brho_\Si)$ obtained by reducing $\theta_v$ modulo $(t)$. It maps $(\overline{Z}_1, \overline{Z}_2)$ onto ${\overline{Z}_1}_{\vert_\Sigma} - \bsm 0 \\ {\overline{y}_2}_{\vert_\Sigma} \esm$ where the bar denotes the reduction mod $(t)$.
\begin{lem} \label{det0}
The torsion has a pole at $x$ if and only if $\bar{\theta}$ is not an isomorphism.
\end{lem}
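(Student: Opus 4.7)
The plan is to reduce the lemma to a standard fact about determinants of $\cO_v$-linear maps between free $\cO_v$-modules of the same rank, namely that such a map has determinant of positive valuation iff its reduction modulo $t$ fails to be an isomorphism.

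First I would combine the Milnor gluing formula
\[
\tor(M, \rho) = \frac{\tor(M_1, h_1)\,\tor(M_2, h_2)}{\tor(\Sigma, h_\Sigma)} \cdot \tor(\mathcal{H}, h_1, h_2, h_\Sigma)
\]
with the preceding proposition, which asserts that the three factors $\tor(M_1), \tor(M_2), \tor(\Sigma)$ belong to $\cO_v^*$. This immediately gives
\[
v(\tor(M, \rho)) = v(\tor(\mathcal{H}, h_1, h_2, h_\Sigma)).
\]
Using the remark identifying $\tor(\mathcal{H}, h_1, h_2, h_\Sigma)$ with $\det(\theta)^{-1}$, the torsion has a pole at $x$ if and only if $v(\det(\theta)) > 0$.

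Next I would show that $\theta$ is defined at the level of the $\cO_v$-lattices. By Lemma \ref{lem free}, the modules $H^1(M_1,\rho_1)_v$, $H^1(M_2,\rho'_2)_v$ and $H^1(\Sigma,\rho_\Sigma)_v$ are free $\cO_v$-modules, and moreover the source and target have the same total rank $-2\chi(\Sigma_1)$. Since $U_n = \bsm t^n & 0 \\ 0 & 1 \esm$ has entries in $\cO_v$ (as $n \geq 1$, the ideal point being genuinely ideal), the map $(Z_1, Z_2) \mapsto Z_1|_\Sigma - U_n Z_2|_\Sigma$ sends the $\cO_v$-lattice on the left into the $\cO_v$-lattice on the right. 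Hence, with respect to the $\cO_v$-bases chosen in the proof of the preceding proposition, $\theta$ is represented by a square matrix with entries in $\cO_v$ and its determinant lies in $\cO_v$.

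The key linear-algebra fact is then: for a square matrix $T \in M_N(\cO_v)$ between free $\cO_v$-modules of rank $N$, we have $v(\det T) > 0$ if and only if the reduction $\bar{T} \in M_N(k)$ is not invertible, equivalently $T \otimes k$ is not an isomorphism of $k$-vector spaces. Applying this to $\theta$, and observing that $\theta \otimes_{\cO_v} k = \bar{\theta}$ (which gives the explicit formula $(z_1, z_2) \mapsto z_1|_\Sigma - \bsm 0 \\ y_2|_\Sigma \esm$, since $U_n \bmod t = \bsm 0 & 0 \\ 0 & 1 \esm$ for $n \geq 1$), yields the equivalence claimed in the lemma. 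The only subtle point, and the main thing to verify carefully, is that the bases chosen for the $k(X)$-vector spaces can indeed be taken to be $\cO_v$-bases of the free lattices and that reduction modulo $t$ really recovers $\bar{\theta}$; this was precisely arranged in the preceding proposition and remark, so no additional work is needed.
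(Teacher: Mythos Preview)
Your proof is correct and follows essentially the same approach as the paper: the paper's own argument is the one-line observation that $\det(\bar{\theta})$ is the reduction of $\det(\theta)$ modulo $t$, so $v(\det\theta)>0$ iff $\det(\bar{\theta})=0$. You have simply made explicit the surrounding reductions (the Milnor formula, the fact that $\theta$ is integral over $\cO_v$, and that its mod-$t$ reduction is $\bar{\theta}$) which the paper takes for granted from the preceding proposition and remark.
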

\begin{proof}
It is clear from the fact that $\det(\bar{\theta}) = (\det{\theta})(0)$, that is $v(\det(\theta)) \ge 0$ if and only if $\det(\bar{\theta}) = 0$. 
\end{proof}

Let us prove that $\overline{\theta}$ is not an isomorphism. Let $\partial M_2$ be the boundary of $M_2$, and $\partial \Sigma \subset \partial M_2$ be the union of the boundary components of $\Si$.  
\begin{lem}
\label{lem:Boundaryslope}
Let  $\brho_{2,\partial \Sigma}$ be the restriction of $\brho_{2,\Sigma}$ to the boundary  $\partial \Sigma$ of $\Sigma$. Then the $k$-vector space $H^1(\partial \Sigma, \brho_{2,\partial \Sigma})$ is non-trivial.
\end{lem}
\begin{proof}
By hypothesis (\ref{hypo:TrivialRoot}) we have that $\brho_2([\partial \Sigma]) = \bsm 1 & *\\ 0 & 1 \esm$, hence $H^0(\partial \Sigma, \brho_{2,\partial \Sigma}) $ is not trivial, and the lemma follows by Poincar\'e duality.
\end{proof}

The same long exact sequence for the coefficients of $C^*(\partial \Sigma, \brho_{2,\partial \Sigma})$ than in Lemma \ref{lem:LongSequence} ends up with 
$$\ldots \to H^1(\partial \Sigma, \brho_{2,\partial \Sigma}) \xrightarrow{p_{\partial \Sigma}} H^1(\partial \Sigma,k) \to 0$$
Since $H^1(\partial \Sigma,k)$ has dimension the number of connected components of $\Sigma$, in particular it is not trivial, and the map $p_{\partial \Sigma}\colon H^1(\partial \Sigma, \brho_{2,\partial \Sigma}) \to H^1(\partial \Sigma, k)$ is not zero.
On the other hand, the inclusion $\Si \subset \partial M_2$ provides the sequence
\begin{equation} \label{M2 bord}
H^1(\partial M_2, \Si; \brho_{2,\partial M_2}) \to H^1( \partial M_2, \brho_{2,\partial M_2}) \to H^1(\Si,  \brho_{2,\Si}) \to H^2(\partial M_2, \Si; \brho_{2,\partial M_2}) \to 0
\end{equation}
Denote by $A$ the union of small annulus neighborhood of the components of $\partial \Sigma$ in $\partial M_2$. By excision, we have $H^2(\partial M, \Si; \brho_{2,\partial M_2}) \simeq H^2(A, \partial A; \brho_{2,\partial \Sigma})$. Now Poincar\'e-Lefschetz duality implies that the latter is isomorphic to $H_0(A, \brho_{2,\partial \Sigma})$. Now $A$ retracts on $\partial \Sigma$, hence we obtain the isomorphism is $H^2(\partial M, \Si; \brho_{2,\partial M_2})\simeq H_0(\partial \Sigma, \brho_{2,\partial \Sigma})$. Again by duality we obtain $H^2(\partial M, \Si; \brho_{2,\partial M_2})\simeq H^1(\partial \Sigma, \brho_{2,\partial \Sigma})$.

We summarize that in the following commutative diagram:
\begin{center}
\begin{tikzpicture}
\node (A) {$H^1(M_2, \brho_2)$};
\node (B) [below of=A] {$H^1(\partial M_2, \brho_{2, \partial M_2})$};
\node (C) [below of=B] {$H^1(\Si, \brho_{2, \Si})$};
\node (D) [left of=C, xshift=-2cm] {$H^1(\Si, \la)$};
\node (E) [right of=C, xshift=2cm] {$H^1(\Si, \la^{-1})$};
\node (F) [below of=C] {$H^1(\partial \Sigma, \brho_{2, \partial \Sigma}))$};
\node (G) [right of=F, xshift=2cm] {$H^1(\partial \Sigma, k)$};
\node (H) [right of=E] {$0$};
\node (I) [right of=G] {$0$};
\node (J) [below of=G] {$0$};
\draw[->] (A) to node {$i_{\partial M_2}$} (B);
\draw[->] (B) to node {$i_{\Si}$} (C);
\draw[->] (C) to node {$p_2$} (E);
\draw[->] (D) to node {$i_2$} (C);
\draw[->] (C) to node {$i_{\partial \Si}$} (F);
\draw[->] (F) to node {$p_{\partial \Sigma}$} (G);
\draw[->] (E) to node {$i_{\partial \Si}$} (G);
\draw[->] (E) to (H);
\draw[->] (G) to (I);
\draw[->] (G) to (J);
\draw[->, dotted] (A) to node {$F$} (E);
\end{tikzpicture}
\end{center}

\begin{lem} \label{lowdim}
The composition map $$F: H^1(M_2, \brho_2) \xrightarrow{i_{\partial M_2}} H^1(\partial M_2, \brho_{2, \partial M_2}) \xrightarrow{i_\Si} H^1(\Si, \brho_{2, \Si}) \xrightarrow{p_2} H^1(\Si, \la^{-1})$$ is not an isomorphism.
\end{lem}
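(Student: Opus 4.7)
The plan is to show that $F$ is not surjective; since the source and target both have dimension $-\chi(\Si_1)$ (the source by Lemma \ref{lem free}, and the target by an Euler-characteristic computation using that $\la$ is non-trivial on $\pi_1(\Si)$, which is itself forced by Lemma \ref{lem free} and the short exact sequence $0\to \la^{-1}\to \brho_{1,\Si}\to \la\to 0$), non-surjectivity will imply $F$ is not an isomorphism.

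I would first verify that the displayed diagram commutes. The only non-obvious square is the one relating $p_2$ and $p_\g$; it commutes because both vertical arrows extract the $\la^{-1}$-component (i.e., the lower entry) of an upper-triangular cocycle, while both horizontal arrows are the restriction to $\g$. By this commutativity, the composition
$$i_{\partial\Si}\circ F\colon H^1(M_2,\brho_2)\longrightarrow H^1(\g,k)$$
equals $p_\g\circ i_{\partial\Si}\circ i_\Si\circ i_{\partial M_2}$, obtained by going the other way around through $H^1(\Si,\brho_{2,\Si})$ and $H^1(\g,\brho_{2,\g})$.

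The key step, and what I expect to be the main obstacle, is to check that under the Poincaré--Lefschetz identification $H^2(\partial M_2,\Si;\brho_{2,\partial M_2})\simeq H^1(\g,\brho_{2,\g})$ used to derive (\ref{M2 bord}), the connecting homomorphism $H^1(\Si,\brho_{2,\Si})\to H^2(\partial M_2,\Si;\brho_{2,\partial M_2})$ corresponds precisely to the geometric restriction $i_{\partial\Si}\colon H^1(\Si,\brho_{2,\Si})\to H^1(\g,\brho_{2,\g})$ of the diagram. This is a naturality check tracking the excision and duality isomorphisms. Once it is in hand, exactness in (\ref{M2 bord}) gives $i_{\partial\Si}\circ i_\Si=0$, hence $i_{\partial\Si}\circ F=0$.

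To conclude, I would show that $i_{\partial\Si}\colon H^1(\Si,\la^{-1})\to H^1(\g,k)$ is itself non-zero, by inspecting the long exact sequence of the pair $(\Si,\g)$ with coefficients $\la^{-1}$:
$$H^1(\Si,\la^{-1})\xrightarrow{i_{\partial\Si}}H^1(\g,\la^{-1}|_\g)\longrightarrow H^2(\Si,\g;\la^{-1}).$$
Since $\la|_\g=1$, the middle term is $H^1(\g,k)\neq 0$; and by Poincaré--Lefschetz duality $H^2(\Si,\g;\la^{-1})\simeq H_0(\Si,\la^{-1})=0$, using the non-triviality of $\la$ on $\pi_1(\Si)$. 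Hence $i_{\partial\Si}$ is surjective, in particular non-zero. Combined with $i_{\partial\Si}\circ F=0$, this forces $F$ to fail surjectivity, so by the dimension count $F$ is not an isomorphism.
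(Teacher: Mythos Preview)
Your proposal is correct and follows essentially the same route as the paper: both argue by a dimension count that it suffices to show $F$ is not surjective, then use commutativity of the diagram together with exactness of the vertical sequence (from (\ref{M2 bord})) to get $i_{\partial\Si}\circ F=0$, and conclude because $i_{\partial\Si}\colon H^1(\Si,\la^{-1})\to H^1(\g,k)$ is non-zero. You are in fact more careful than the paper on two points it leaves implicit---the non-triviality of $\la$ on $\pi_1(\Si)$ (which you correctly extract from Lemma~\ref{lem free} via $H^0(\Si,\brho_\Si)=0$), and the actual proof that $i_{\partial\Si}$ is non-zero (your long exact sequence of the pair $(\Si,\g)$ plus Poincar\'e--Lefschetz duality)---as well as flagging the naturality check identifying the connecting map in (\ref{M2 bord}) with the geometric restriction $i_{\partial\Si}$.
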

\begin{proof}
The first observation is that $\dim H^1(M_2, \brho_2) = \frac{\dim H^1(\Si, \brho_{2, \Si})}{2} = \dim H^1(\Si, \la^{-1})$.
We need to prove that the map $F$  is not onto. By way of contradiction, assume that the map $F$ is onto, hence it has a non-trivial image in $H^1(\partial \Sigma, k)$ through the map $i_{\partial \Si}$. On the other hand, the vertical sequence $H^1(\partial M_2,  \brho_{2, \partial M_2}) \xrightarrow{i_{\partial M_2}} H^1(\Si, \brho_{2, \Si}) \xrightarrow{i_{\partial \Si}} H^1(\partial \Sigma, \brho_{2, \partial \Sigma})$ is exact by equation (\ref{M2 bord}), and the commutativity of the diagram shows that $i_{\partial \Si} \circ F =0$, a contradiction. It proves the lemma.
\end{proof}

\begin{proof}[Proof of Theorem \ref{theo:Separating}]
We just have to observe that $\bar{\theta}\colon H^1(M_1, \brho_1) \oplus H^1(M_2, \brho_2) \to H^1(\Si, \brho_{1, \Si})$ is the direct sum of \benu
\item the injective map $H^1(M_1, \brho_1) \to H^1(\Si, \brho_{1, \Si})$ induced by inclusion 
\item the map $H^1(M_2, \brho_2) \to H^1(\Si, \brho_{2, \Si}) \xrightarrow{p_2} H^1(\Si, \la^{-1}) \xrightarrow{i_1} H^1(\Si, \brho_{1, \Si})$ which is $i_1 \circ F$.
\eenu
The first map has maximal rank $-\chi(\Si)$, but the second has rank smaller than $-\chi(\Si)$ by Lemma \ref{lowdim}. Hence $\bar{\theta}$ is not onto and by Lemma \ref{det0} we conclude that the torsion vanishes at $x$. It proves the theorem.
\end{proof}

\subsection{The non-split case} \label{non-split case}
In this section we prove Theorem \ref{theo:Separating} in the case where the complement of any connected component of the incompressible surface associated to $x$ is connected. 

\medbreak

Recall that we have $\Sigma=\Sigma_1 \cup \ldots \cup \Sigma_n$ union of $n$ parallel connected copies. We fix a base-point $p \in \Sigma_1$, and that we identify $\pi_1(\Sigma)$ with $\pi_1(\Sigma_1)$. We have the following splitting 
\begin{equation}
\label{equa:splitting}
M =~H \cup_{\Sigma_1 \cup \Sigma_n} V(\Sigma)
\end{equation}
where $V(\Sigma)$ is a neighborhood of $\Sigma$ homeomorphic to $\Sigma_1 \times [0,1]$ with $\partial V(\Sigma) = \Sigma_1 \cup \Sigma_n$. We identify as well $\pi_1(V(\Sigma))$ with $\pi_1(\Sigma)$. Given $\al\colon \pi_1(\Sigma_1) \to \pi_1(\Sigma_n)$, one can write the fundamental group of $M$ as the following HNN group extension $\pi_1(M) =~\langle \pi_1(H), v \  \vert \ v \g v^{-1} =~\al(\g), \forall \g \in \pi_1(\Sigma) \rangle$.

We denote by $\rho_1\colon \pi_1(\Sigma) \to \Sldeux(\cO_v)$ the restriction of $\rho$ to $\pi_1(\Sigma)$, and similarly by $\rho_n\colon \pi_1(\Sigma_n) \to~\Sldeux(k(X))$ its restriction to $\pi_1(\Sigma_n) = v \pi_1(\Sigma) v^{-1}$, hence $\rho_n (\g) = V_n \rho_1(\g) V_n^{-1}= \check{U}_n^{-1} \rho_1(\gamma) \check{U}_n$ for $ \g \in \pi_1(\Sigma)$ (see Lemma \ref{lem:NonSplit} for notations).
The decomposition (\ref{equa:splitting}) induces the following exact sequence of twisted complexes:
\begin{equation*}
0 \to C^*(M, \rho) \to C^*(H, \rho_H) \oplus C^*(\Sigma, \rho_1) \to C^*(\Sigma, \rho_1) \oplus C^*(\Sigma, \rho_n) \to 0
\end{equation*}
The following proposition recaps the series of lemmas in Subsection \ref{split case}, we refer to the corresponding lemmas for proofs, that translate in exactly the same way here. We use the isomorphism $H^1(\Sigma, \rho_n) \to H^1(\Sigma, \rho_1)$, $Z \mapsto \check{U}_n Z$.
\begin{prop}
\label{prop:pole}
The vanishing order of the torsion at the ideal point $x \in \bar{X}$ is given by $-v(\det \theta)$, where the isomorphism $\theta$ is given by 
\begin{align*}
\theta\colon H^1(H, \rho_H) &\oplus H^1(\Sigma, \rho_1) \to H^1(\Sigma, \rho_1) \oplus H^1(\Sigma, \rho_1)\\
(Z_1&, Z_2) \mapsto ({Z_1}_{\vert\Sigma}-Z_2, \check{U}_n ({Z_1}_{\vert\Sigma}-Z_2))
\end{align*}
\end{prop}

As usual we will denote by $\brho_1 \colon \pi_1(\Sigma) \to \SL_2(\cO_v) \to \SL_2(k)$ the residual representation induced by $\rho_1$.
We want to show that $v(\det \theta) > 0$. We focus on the residual map, which has the following explicit expression: 
\begin{align*}
\bar{\theta}\colon H^1(H, \brho_H) &\oplus H^1(\Sigma, \brho_1) \to H^1(\Sigma, \brho_1) \oplus H^1(\Sigma, \brho_1) \\
(\overline{Z}_1&,\overline{Z}_2) \mapsto \left(\overline{Z}_1\vert_\Sigma-\overline{Z}_2\vert_\Sigma, \bm -\overline{x}_1\vert_\Sigma+\overline{x}_2\vert_\Sigma \\ 0 \ema\right)
\end{align*}
where $\overline{Z}_i = \bm \overline{x}_i \\ \overline{y}_i \ema$.
We show that it has a non-trivial kernel.

\begin{lem} \label{kernel}
The map 
\begin{align*}
\bar{\theta}_2\colon H^1(H, \brho_H)\oplus H^1(\Sigma, \brho_1) &\to H^1(\Sigma, \brho_1)\\
(\overline{Z}_1, \overline{Z}_2) &\mapsto \bm  -\overline{x}_1\vert_\Sigma+\overline{x}_2 \\ 0 \ema
\end{align*}
is not onto.
\end{lem}
\begin{proof}
We know from lemma \ref{lem:NonSplit} that the residual representation $\brho_1$ has the form $\brho_1(\g) = \bsm \la(\g) & 0 \\ * & \la^{-1} (\g) \esm$, for $\g \in \pi_1(\Sigma)$. Using the same arguments than in Subsection \ref{split case} we have the exact sequence 
\begin{align*}
0 \to H^1(\Sigma, \la) \to H^1(\Sigma&, \brho_1) \to H^1(\Sigma, \la^{-1}) \to 0\\
x \mapsto \bsm x\\0\esm , &\ \bsm x\\y\esm \mapsto y
\end{align*}
And we observe that the image of $\bar{\theta}_2$ is included in the strict subspace $H^1(\Sigma, \la)$ of  $H^1(\Sigma, \brho_1)$.
\end{proof}

\begin{proof}[Proof of Theorem \ref{theo:Separating}]
By Lemma \ref{kernel}, the map $\bar{\theta}_2$ is not onto, hence the map $\theta$ is not onto, in particular its determinant vanishes. Now we deduce directly from Proposition \ref{prop:pole} that the torsion has a pole at $x$, and it proves the theorem.
\end{proof}

\bibliography{biblio} 
\bibliographystyle{plain}

\end{document}